\def\wbox#1;#2;{\vbox{\hrule\hbox{\vrule height#1mm\kern#2mm\vrule
  height#1mm}\hrule}}
\newcommand\RE{\mathbb{R}}
\renewcommand\div{\mathop{\rm{div}}\nolimits}
\newcommand\Div{\mathop{\mathbf{div}}\nolimits}
\newcommand\Grad{\mathop{\boldsymbol\nabla}\nolimits}
\newcommand\Grads{\mathop{\boldsymbol\nabla_{\rm sym}}\nolimits}
\newcommand\grad{\mathop\nabla\nolimits}
\newcommand\Huo{H^1_0(\Omega)}
\newcommand\Ldo{L^2_0(\Omega)}
\newcommand\Oft{\Omega^f_t}
\newcommand\Ost{\Omega^s_t}
\newcommand\B{\mathcal B}
\renewcommand\u{\mathbf{u}}
\renewcommand\v{\mathbf{v}}
\renewcommand\d{\mathbf{d}}
\renewcommand\c{\mathbf{c}}
\newcommand\w{\mathbf{w}}
\newcommand\x{\mathbf{x}}
\newcommand\s{\mathbf{s}}
\newcommand\X{\mathbf{X}}
\newcommand\Y{\mathbf{Y}}
\newcommand\Z{\mathbf{Z}}
\newcommand\WW{\mathcal{W}}
\newcommand\FF{\mathbf{F}}
\newcommand\ssigma{\boldsymbol\sigma}
\newcommand\llambda{\boldsymbol\lambda}
\newcommand\mmu{\boldsymbol\mu}
\newcommand\F{\mathbb{F}}
\renewcommand\P{\mathbb{P}}
\newcommand\ds{\mathrm{d}\s}
\newcommand{\dr}{\delta\rho}
\newcommand\Hub{(H^1(\B))^d}
\newcommand\Hubd{(\Hub)'}
\newcommand\dt{\Delta t}
\newcommand\Vh{V_h}
\newcommand\Qh{Q_h}
\newcommand\Sh{S_h}
\newcommand\Lh{\Lambda_h}
\newcommand\lh{\llambda_h}
\newcommand\ucX{\u(\X(\cdot,t),t)}
\newcommand\vcX{\v(\X(t))}
\newcommand\Acca{M}
\newcommand{\ibm}{\textsc{ibm}\xspace}
\newcommand{\feibm}{\textsc{fe-ibm}\xspace}
\newcommand{\ibmdlm}{\textsc{dlm-ibm}\xspace}
\newcommand{\cfl}{\textsc{cfl}\xspace}
\newcommand{\fd}{\textsc{fd}\xspace}
\theoremstyle{plain}
\newtheorem{thm}{Theorem}
\newtheorem{proposition}[thm]{Proposition}
\newtheorem{problem}{Problem}
\newtheorem{ass}{Assumption}
\theoremstyle{remark}
\newtheorem*{remark}{Remark}
\definecolor{color0}{rgb}{0.4,1.0,0.4}
\definecolor{color1}{rgb}{1.0,1.0,0.4}
\definecolor{color2}{rgb}{1.0,1.0,0.4}
\definecolor{color3}{rgb}{1.0,0.501960784314,0.0}
\definecolor{color4}{rgb}{1.0,0.501960784314,0.0}
\definecolor{color5}{rgb}{1.0,0.0,0.0}
\definecolor{color6}{rgb}{1.0,0.0,0.0}
\definecolor{color7}{rgb}{1.0,0.0,0.0}
\definecolor{color8}{rgb}{1.0,0.0,0.0}
\definecolor{color9}{rgb}{1.0,0.0,0.0}
\let\NEW\relax
\begin{document}
\title[DLM/FD for IBM]
{The Finite Element Immersed Boundary Method with
Distributed Lagrange multiplier}
\author{Daniele Boffi}
\address{Dipartimento di Matematica ``F. Casorati'', Universit\`a di Pavia,
Italy}
\email{daniele.boffi@unipv.it}
\urladdr{http://www-dimat.unipv.it/boffi/}
\author{Nicola Cavallini}
\address{Dipartimento di Matematica ``F. Casorati'', Universit\`a di Pavia,
Italy}
\email{nicola.cavallini@unipv.it}
\urladdr{http://www-dimat.unipv.it/\~{}cavallini/}
\author{Lucia Gastaldi}
\address{DICATAM, Universit\`a di Brescia, Italy}
\email{lucia.gastaldi@unibs.it}
\urladdr{http://www.ing.unibs.it/gastaldi/}

\subjclass{65M60, 65M12, 65M85}

\begin{abstract}

We introduce a new formulation for the finite element immersed boundary method
which makes use of a distributed Lagrange multiplier. We prove that a full
discretization of our model, based on a semi-implicit time advancing scheme,
is unconditionally stable with respect to the time step size.

\end{abstract}
\maketitle
\section{Introduction}
\label{se:intro}

The Immersed Boundary Method (\ibm) is an effective method for the
approximation of fluid-structure interaction problems.
After its introduction (see~\cite{PeAN} and the references therein), which was
based on a finite difference approximation of the fluid equations, several
attempts have been made in order to consider a finite element version of the
IBM; interesting results have been obtained for various formulations
(see, in
particular~\cite{feibm1,WangLiu,ZGWangLiu,BGHM3AS,BGHCAS,LiuKimTang,heltai,%
BGHP,BCG2011,costanzo}).
We shall refer to the model introduced in~\cite{feibm1} as the \feibm
formulation.

An important issue when dealing with fluid-structure interactions consists in
the choice of the time advancing scheme. For instance, it is well-known that
the Arbitrary Lagrangian Eulerian formulation (ALE), which is one of the most
popular strategies for dealing with partial differential equations defined on
moving domains, suffers from instabilities when approximating biological
models (same or similar densities for fluid and solid) unless fully implicit
schemes are used for the time evolution. This issue has been explained
in~\cite{causin} with the
help of a one dimensional model problem. On the other hand, the \feibm method
allows for a semi-implicit strategy at the price of a \cfl condition which can
be more or less severe depending of the fluid and solid dimensions
(see~\cite{BCG2011} for more details).

In the original \feibm formulation the evolution of the solid is governed by
an ordinary differential equation which reads
\[
\frac{\partial\X}{\partial t}(\s,t)=\u(\X(\s,t),t),
\]
where $\X$ denotes the position of the solid and $\u$ the fluid velocity (see
Problem~\ref{pb:pbvar} for more details).
In this paper we propose a modification to this approach with the introduction
of a suitable Lagrange multiplier. More precisely, the new equation reads
\[
\c_1(\mmu,\u(\X(\cdot,t),t))-
\c_2\Big(\mmu,\frac{\partial\X}{\partial t}(\cdot,t)\Big)=0
\qquad\forall\mmu\in\Lambda,
\]
where $\c_1(\cdot,\cdot)$ and $\c_2(\cdot,\cdot)$ are bilinear forms such that
$c_1(\mmu,\v(\X))-\c_2(\mmu,\Y)=0$ for all $\mmu\in\Lambda$ implies $\v(\X)=\Y$.

With this modification, the analogies between the \feibm and some versions of
the fictitious domain approach become more apparent. In particular, our
formulation, which is presented in Problem~\ref{pb:DLM}, can be considered as
a specific case of a fictitious domain approach with distributed Lagrange
multiplier.  For this reason we shall refer to this new formulation as the
\ibmdlm approach.
\NEW{The fictitious domain method (\fd) has been considered in several papers.
The original \fd (see, for instance,~\cite{glopanper1,glopanper2}) uses a
Lagrange multiplier along the boundary of the domain in order to deal with
boundary conditions. On the other hand, the \fd with distributed Lagrange
multiplier, presented in~\cite{girglo1995,girglopan,glokuz2007}, has been
originally introduced for the approximation of particulate flow: the fluid
domain is artificially extended to include also regions occupied by particles
(considered as solid bodies). Moreover, in~\cite{yu} the results
of~\cite{glopanhj} for the fluid/rigid-body interaction have been extended to
allow flexible bodies and a fractional step scheme is presented together with
numerical experiments. Our \ibmdlm is different from the previous
investigations, since our structure has a viscoelastic nature.}

The \ibmdlm
shares some analogies with a variational formulation presented
in~\cite{costanzo}.
One of the main differences consists in the fact that we are introducing a
Lagrange multiplier, thus giving some more flexibility to the resulting
numerical scheme.

In this paper we study our new formulation and show that it can be
successfully applied to the problem under consideration. More precisely, we
prove that a semi-implicit scheme for the time evolution of the \ibmdlm method
is unconditionally stable (no restriction at all on the time step).
Several numerical examples fully confirm our theoretical findings. \NEW{It can
be remarked that this achievement has some similarities with
the results of~\cite{newren} where an unconditionally stable discretization
for a finite difference version of the \ibm is presented.
In~\cite{newren} the key property for the proof is the
symmetry between the so-called spreading and interpolation operators and the
fact that the discretization of these two operators is performed by using the
position of the structure at the same time step. The variational
formulation, intrinsic in our finite element model, avoids the introduction of
the spreading and interpolation operators, since the Lagrangian and Eulerian
variable are linked together naturally by suitable integral terms. Moreover,
our introduction of the distributed Lagrange multiplier has the effect that
the term involving the movement of the structure is handled in a dual
form with respect to the terms containing the Lagrange multiplier (see the
symmetry of formulation~\eqref{eq:matriciona}). In our analysis, this symmetry
makes it possible to cancel out some terms (when evaluated at the same time
step). The generality of our variational formulation gives our more
flexibility in the design of the time integration scheme as well as for the
choice of the finite element spaces used for the discretization of the fluid
and of the structure. Moreover, our result holds in a more general setting
than~\cite{newren}, where the operator associated to the elastic forces is
assumed linear and self-adjoint; in our work, we suppose that the potential
energy density is convex, which is a reasonable assumption in the case of
incompressible elastic materials.

Our new formulation is also more robust than that of~\cite{costanzo}, which
does not turn out to be unconditionally stable.}

Some additional preliminary studies have been performed in~\cite{fd,ruggeri},
showing that, for some simplified formulation, the \ibmdlm approach is inf-sup
stable.

Another important issue for the approximation of incompressible fluids is
the discretization of the divergence free constraint on the fluid
velocity. This problem is related to the mass conservation properties of the
scheme and has been analyzed, in the framework of the \feibm,
in~\cite{bcgg2012,bcggumi}.
Surprisingly enough, it turns out that the \ibmdlm formulation enjoys
better mass conservation properties than the original \feibm.
For the moment, we do not have a theoretical explanation for this phenomenon,
which is clearly confirmed by our numerical experiments in
Section~\ref{se:indoeuropea}.

The structure of our paper is as follows: in Section~\ref{se:pb} we recall the
formulation of the \feibm, in Section~\ref{se:FD} we introduce our \ibmdlm
formulation, which is then approximated in time and space in
Sections~\ref{se:timescheme} and~\ref{se:fe_FD}, respectively. Finally,
Section~\ref{se:indoeuropea} reports our numerical experiments.

\section{Problem setting}
\label{se:pb}
In this section we recall the variational formulation of the \ibm
for fluid-structure interaction problems presented
in~\cite{BGHP,BCG2011}. Our formulation covers both the cases of thick and thin
structures. In the latter case the region occupied by the structure can be
represented as a domain of codimension 1 with respect to the dimension of the
fluid region. In the following subsections we introduce the problem
corresponding to the two cases.
\subsection{Thick structures}
Let $\Omega\subset\RE^d$, $d=2,3$, be a bounded domain with Lipschitz continuous
boundary. We assume that $\Omega$ is subdivided into
two connected time dependent subregions $\Oft$ and $\Ost$, which
denote the domains occupied by the fluid and the solid material, respectively.
We introduce a Lagrangian framework to deal with the motion of the
solid; hence we assume that $\Ost$ can be obtained as the image of a reference
domain $\B\subset\RE^d$. We denote by $\s$ the position of a point in $\B$ and
by $\X:\B\to\Ost$ the mapping which associates to each $\s\in\B$,
the point $\x=\X(\s,t)\in\Ost$.
The deformation gradient is defined as $\F=\Grad_s\X$, the notation
$|\F|$ indicates the determinant of the Jacobian matrix $\F$.

The constitutive equations governing the
behavior of fluids and solids are the mass balance equation and the
conservation of momenta, which in absence of external forces
can be written in strong form as follows:
\begin{equation}
\label{eq:equations}
\aligned
&\frac{d\rho}{dt}+\rho\div\u=0\\
&\rho \dot \u=\rho \frac{D\u}{dt}=
\rho\left(\frac{\partial \u}{\partial t}+\u\cdot\Grad\u\right)
=\Div\ssigma\\
\endaligned
\end{equation}
where $\u$ represents the velocity, $\ssigma$ the Cauchy stress-tensor and
$\rho$ the mass density.

We assume that both the fluid and the solid material are incompressible. This
is equivalent to impose $\div\u=0$ in $\Omega$. As a consequence, $|\F|$ is
constant in time and equals the corresponding value at time $t=0$.
In particular, $|\F|=1$ if the reference domain coincides with the initial
configuration of $\Ost$, that is $\B=\Omega^s_0$.

Assuming that the fluid and the solid material have mass densities
$\rho_f$ and $\rho_s$, respectively, with $0<\rho_f\le\rho_s$, we set
\begin{equation}
\label{eq:density}
\rho=\left\{
\begin{array}{ll}
\rho_f&\text{in }\Oft\\
\rho_s&\text{in }\Ost.
\end{array}
\right.
\end{equation}
We consider a Newtonian fluid characterized by the usual Navier--Stokes
stress tensor
\[
\ssigma_f=-p\mathbb{I}+\nu\left(\Grad\u+(\Grad\u)^\top\right),
\]
while the structure is composed by an incompressible viscous hyperelastic
material so that the Cauchy stress tensor can be separated into a fluid-like
part and an elastic part. Hence we set
\begin{equation}
\label{eq:Cauchy}
\ssigma=\left\{
\begin{array}{ll}
\ssigma_f&\text{in }\Oft\\
\ssigma_f+\ssigma_s&\text{in }\Ost.
\end{array}
\right.
\end{equation}
\NEW{
We do not make precise the constitutive relation of $\ssigma_s$, but will
provide suitable assumptions under which our theory is valid (see, in
particular, Assumption~\ref{ass}). For a more detailed discussion about
possible models we refer to~\cite{BGHP}.
}

In the following we shall deal with the quantities related to the structure
using Lagrangian variables, hence we express the elastic part of the Cauchy
stress tensor in term of the first Piola-Kirchhoff stress tensor $\P$ defined
as
\begin{equation}
\label{eq:Piola}
\P(\s,t)=|\F(\s,t)|\ssigma_s(\x,t)\F^{-\top}(\s,t)\qquad\text{for }\x=\X(\s,t).
\end{equation}
From the principle of virtual works, taking into account~\eqref{eq:density}
and~\eqref{eq:Cauchy}, we have the following problem in weak form (we refer
to~\cite{BCG2011} for a detailed derivation).
\begin{problem}
Given $\u_0\in(\Huo)^d$ and $\X_0\in W^{1,\infty}(\B)^d$,
find $(\u(t),p(t))\in(\Huo)^d\times\Ldo$ and $\X(t) \in\Hub$,
such that for almost every $t\in]0,T[$ it holds
\begin{subequations}
\begin{alignat}{2}
  &\rho_f\frac d {dt}(\u(t),\v)+b(\u(t),\u(t),\v)+a(\u(t),\v)\qquad\notag&&\\
  &\qquad-(\div\v,p(t))=\langle\d(t),\v\rangle+\langle\FF(t),\v\rangle
   &&\quad\forall\v\in(\Huo)^d
     \label{eq:NS1}\\
  &(\div\u(t),q)=0&&\quad\forall q\in\Ldo
     \label{eq:NS2}\\
  &\langle\d(t),\v\rangle=
   -\dr\int_\B\frac{\partial^2\X}{\partial t^2}\v(\X(\s,t))\,\ds
      &&\quad\forall\v\in(\Huo)^d
     \label{eq:excessvar}\\
  &\langle\FF(t),\v\rangle= - \int_\B \P(\F(\s,t)): \nabla_s
\v(\X(\s,t))\,\ds
      &&\quad\forall\v\in(\Huo)^d
     \label{eq:forzavar}\\
  &\frac{\partial\X}{\partial t}(\s,t) =\u(\X(\s,t),t) &&\quad\forall \s\in \B
     \label{eq:ode}\\
  &\u(0)=\u_0\quad\mbox{\rm in }\Omega,\qquad\X(0)=\X_0\quad\mbox{\rm in }\B.
     \label{eq:ci}
\end{alignat}
\end{subequations}
\label{pb:pbvar}
\end{problem}

Here $\dr=\rho_s-\rho_f$,
$(\cdot,\cdot)$ stands for the scalar product in $L^2(\Omega)$,
$\mathbb{D}:\mathbb{E}=
\sum_{\alpha,i=1}^d\mathbb{D}_{\alpha i}\mathbb{E}_{\alpha i}$ for all tensors
$\mathbb{D}$ and $\mathbb{E}$ 
and
\[
\aligned
a(\u,\v)&=\mmu(\Grads\u,\Grads\v)\quad \text{with }
\Grads\u=\Grad\u+(\Grad\u)^\top,\\
b(\u,\v,\w)&=
\frac{\rho_f}2\left((\u\cdot\Grad\v,\w)-(\u\cdot\Grad\w,\v)\right).
\endaligned
\]
\subsection{Thin structures}
Let us now consider the case of a thin structure with constant thickness $t_s$
very small.
Therefore we can assume that the physical quantities depend only on variables
along the middle section of the structure and are constant in the orthogonal
direction and we consider mathematically the region occupied by the structure
as a curve immersed in a two dimensional fluid or a surface in a three
dimensional one. Hence we have that $\Omega^s_t\subset\RE^d$,
$\B\subset\RE^m$ with $m=d-1$, $\X:\B\to\Omega$ and the deformation gradient
$\F:\B\to\RE^{d\times m}$ is such that
\[
|\F|=\left|\frac{\partial\X}{\partial s}\right| \text{ if }m=1,\qquad
|\F|=\left|
\frac{\partial\X}{\partial s_1}\times\frac{\partial\X}{\partial s_2}
\right|\text{ if }m=2,
\]
$s$, $s_1$ and $s_2$ being the parametric variables in $\B$.

In~\cite{BCG2011} we have shown that the variational formulation of the
fluid-structure interaction problem has the same form as Problem~\ref{pb:pbvar}
if we set $\dr=(\rho_s-\rho_f)t_s$ and $\P=t_s\tilde{\P}$ where
$\tilde{\P}$ is obtained with the due modifications from~\eqref{eq:Piola}.
Notice that $\P$ is a tensor with the same dimensions as $\F$.

\subsection{Stability estimate}
We end this section by recalling an energy estimate which follows from the
assumptions on the elastic properties of hyperelastic material which we are
considering in this paper. The results of this section can be found in more
detail in~\cite{heltai,BGHP,BCG2011}.
The hyperelastic materials are characterized by a positive energy density
$W(\F)$ which depends only on the deformation gradient. The energy density is
related to the first Piola--Kirchhoff stress tensor by means of the following
relation:
\begin{equation}
\label{eq:1st-pk}
(\P(\F(\s,t))_{\alpha i}=
\frac{\partial W}{\partial\F_{\alpha i}}(\F(\s,t))
=\left(\frac{\partial W }{\partial \F}(\F(\s,t))\right)_{\alpha i},
\end{equation}
where $i = 1,\ldots,m$ and $\alpha=1,\ldots,d$.

At the end, the elastic potential energy of the body is given by:
\begin{equation}
\label{eq:potenergy}
E\left(\X(t)\right)=\int_\B W(\F(s,t))\ds.
\end{equation}

We make the following assumption on the potential energy density $W$ which will
be useful in the following sections.

\begin{ass}
\label{ass}
We assume that $W$ is a $C^1$ convex function over the set of second order
tensors.
\end{ass}
\begin{remark}
Even though in general elasticity problems the potential energy should be
assumed polyconvex, Assumption~\ref{ass} makes sense in this context since our
material is incompressible, so that $|\F|$ is constant in time.
\end{remark}
In the following proposition, we recall the stability estimate proved
in~\cite{BCG2011}.

\begin{proposition}
\label{le:stabcont}
Let us assume that for almost every $t\in[0,T]$, $\u(t)\in(\Huo)^d$ and
$\X(t)\in(W^{1,\infty}(\B))^d$ solve
Problem~\ref{pb:pbvar}, then the following bound holds true
\begin{equation}
\label{eq:energyest}
\frac{\rho_f}2\frac{d}{dt}||\u(t)||^2_0+\nu||\Grads\u(t)||^2_0+
\frac{\dr}2\frac{d}{dt}\left\|\frac{\partial \X}{\partial t}\right\|^2_{0,\B}
+\frac{d}{dt}E(\X(t))=0,
\end{equation}
where $\|\cdot\|_0$ and $\|\cdot\|_{0,\B}$
denote the norms in $L^2(\Omega)$ and $L^2(\B)$, respectively.
\end{proposition}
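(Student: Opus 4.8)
The plan is to derive the energy identity by testing the variational equations with the natural choices: use the solution velocity $\u(t)$ itself as test function in the momentum equation \eqref{eq:NS1}, use $p(t)$ in the incompressibility constraint \eqref{eq:NS2}, and exploit the constraint \eqref{eq:ode} to rewrite the structure terms $\langle\d(t),\v\rangle$ and $\langle\FF(t),\v\rangle$ in Lagrangian form with $\v(\X(\s,t))$ replaced by $\partial\X/\partial t(\s,t)$. Adding the resulting identities should make the pressure term and the convective term vanish, leaving exactly \eqref{eq:energyest}.

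First I would take $\v=\u(t)$ in \eqref{eq:NS1}. The time-derivative term gives $\rho_f\frac12\frac{d}{dt}\|\u(t)\|_0^2$ by the product rule (this uses that $\u$ is regular enough in time, which the stated hypothesis $\u(t)\in(\Huo)^d$ for a.e.\ $t$, together with the finite-energy context, is meant to cover). The viscous term $a(\u,\u)=\mmu(\Grads\u,\Grads\u)=\nu\|\Grads\u(t)\|_0^2$ — note the paper writes $\mmu$ for what is called $\nu$ in the estimate; I will use $\nu$. The trilinear term $b(\u,\u,\u)$ vanishes identically because $b$ is antisymmetric in its last two arguments by construction. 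Testing \eqref{eq:NS2} with $q=p(t)$ gives $(\div\u(t),p(t))=0$, which cancels the pressure term $-(\div\v,p(t))$ appearing with $\v=\u(t)$. So the left side of \eqref{eq:NS1} reduces to $\rho_f\frac12\frac{d}{dt}\|\u(t)\|_0^2+\nu\|\Grads\u(t)\|_0^2$, and it remains to identify the right side $\langle\d(t),\u(t)\rangle+\langle\FF(t),\u(t)\rangle$.

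For the right-hand side I would use the kinematic constraint \eqref{eq:ode}, which says $\u(\X(\s,t),t)=\partial\X/\partial t(\s,t)$; hence for the test function $\v=\u(t)$ we have $\v(\X(\s,t))=\partial\X/\partial t(\s,t)$ and $\nabla_s\v(\X(\s,t))=\nabla_s(\partial\X/\partial t(\s,t))=\partial\F/\partial t(\s,t)$. Then \eqref{eq:excessvar} becomes $\langle\d(t),\u(t)\rangle=-\dr\int_\B\frac{\partial^2\X}{\partial t^2}\cdot\frac{\partial\X}{\partial t}\,\ds=-\frac{\dr}2\frac{d}{dt}\|\partial\X/\partial t\|_{0,\B}^2$. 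And \eqref{eq:forzavar}, using the hyperelasticity relation \eqref{eq:1st-pk}, becomes $\langle\FF(t),\u(t)\rangle=-\int_\B\P(\F):\frac{\partial\F}{\partial t}\,\ds=-\int_\B\frac{\partial W}{\partial\F}(\F):\frac{\partial\F}{\partial t}\,\ds=-\int_\B\frac{\partial}{\partial t}W(\F(\s,t))\,\ds=-\frac{d}{dt}E(\X(t))$, by the chain rule and definition \eqref{eq:potenergy}. Substituting these two identities into the reduced form of \eqref{eq:NS1} and moving everything to the left yields exactly \eqref{eq:energyest}.

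**The main obstacle** is not algebraic but a matter of regularity and justification: one must be sure that $\u(t)$ is an admissible test function in \eqref{eq:NS1} (it is, since $\u(t)\in(\Huo)^d$), that the map $t\mapsto\|\u(t)\|_0^2$ is differentiable so that $\frac{d}{dt}(\u(t),\u(t))=2(\u'(t),\u(t))$ holds, and — most delicately — that the chain rule $\frac{\partial}{\partial t}W(\F(\s,t))=\frac{\partial W}{\partial\F}(\F):\frac{\partial\F}{\partial t}$ is valid pointwise in $\s$ and integrable over $\B$; this is where the $C^1$ regularity of $W$ from Assumption~\ref{ass} and the $W^{1,\infty}(\B)^d$ regularity of $\X(t)$ enter. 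Convexity of $W$ is not needed for the identity itself (only $C^1$), but it is what makes $E(\X(t))\ge 0$ a genuine energy and hence \eqref{eq:energyest} a true stability estimate. Since the statement merely asks to recall the result of~\cite{BCG2011}, I would keep these justifications brief and refer to that paper for the technical details.
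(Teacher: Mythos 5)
Your argument is correct and is essentially the same energy argument the paper relies on: it recalls the proof from~\cite{BCG2011} and reproduces the identical strategy for the analogous Proposition~\ref{pr:energy_DLM} (test \eqref{eq:NS1} with $\u(t)$, use $q=p(t)$ and the skew-symmetry of $b$, then use \eqref{eq:ode} to turn $\langle\d,\u\rangle$ and $\langle\FF,\u\rangle$ into $-\frac{\dr}2\frac{d}{dt}\|\partial\X/\partial t\|^2_{0,\B}$ and $-\frac{d}{dt}E(\X(t))$ via \eqref{eq:1st-pk} and the chain rule). Your remarks on the $\mmu$/$\nu$ notation and on convexity being unnecessary for the identity itself are also accurate.
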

The above a priori estimate states that a solution of Problem~\ref{pb:pbvar},
if exists, enjoys the following regularity properties
\[
\aligned
&\u\in C^0([0,T];(\Huo)^d)\cap H^1(0,T;(L^2(\Omega))^d)\\
&p\in L^2(0,T;H^1(\Omega))\\
&\X\in W^{1,\infty}(0,T;L^2(\B)^d)\\
& E\left(\X(t)\right)\in L^\infty(0,T).
\endaligned
\]
\section{Fictitious domain formulation with distributed Lagrange
multiplier}
\label{se:FD}
Proposition~\ref{le:stabcont} is fundamental
in the treatment of the numerical approximation of the problem; in particular,
one would like to have the same property for the discrete version of
Problem~\ref{pb:pbvar}. However, it was shown in previous papers on the finite
element discretization of the problem that the space semi-discrete version
of Problem~\ref{pb:pbvar} enjoys the same stability properties, while the
time-space discretization requires a \cfl condition which limits the time
step in terms of the size of the meshes unless a fully implicit formulation is
considered (see~\cite{BGHCAS,BGHM3AS,heltai,BCG2011}).
Here we present a new formulation of
Problem~\ref{pb:pbvar} based on the introduction of a Lagrange multiplier to
enforce the motion condition~\eqref{eq:ode} which turns out to
be unconditionally stable after time discretization.
We present separately such formulation for the case of structures of
codimension zero and one.

We consider three functional spaces
$\Lambda$, $\Acca_1$ and $\Acca_2$, and
two bilinear forms $\c_1:\Lambda\times\Acca_1\to\RE$ and
$\c_2:\Lambda\times\Acca_2\to\RE$ such that the equation
\[
\NEW{
\c_1(\mmu,\Z)-\c_2(\mmu,\Y)=0\quad\forall\mmu\in\Lambda
}
\]
implies that \NEW{$\Z=\Y$ in $\B$}.

\subsection{Thick structures}
Let us set $m=d$ and assume that for almost every $t\in]0,T[$ we have that
$\X(t)\in W^{1,\infty}(\B)^d$, so that $\u(\X(\cdot,t),t)\in\Hub$.

In this case, we set $\NEW{\Acca_1=\Hub}$, $\Acca_2=\Hub$
and we consider two possible choices for the bilinear forms
$\c_1$, $\c_2$ and the space $\Lambda$.
The most natural one is to let $\Lambda$ be the dual space of $\Hub$, that is
$\Lambda=\Hubd$, then $\c_2$ is given by the duality pairing between $\Hubd$
and $\Hub$, and $\c_1$ is defined accordingly
\begin{equation}
\label{eq:defc1}
\aligned
&\NEW{\c_1(\llambda,\Z)=\langle\llambda,\Z\rangle}\quad
&&\llambda\in\Lambda=\Hubd,\ \NEW{\Z\in\Hub}\\
&\c_2(\llambda,\Y)=\langle\llambda,\Y\rangle
&&\llambda\in\Lambda=\Hubd,\ \Y\in\Hub.
\endaligned
\end{equation}
Alternatively, we can associate to any $\llambda\in\Hubd$ an element
$\boldsymbol\varphi\in\Hub$ solution to the following variational equation:
\begin{equation}
\label{eq:isometria}
\int_{\B}(\Grad_s\boldsymbol\varphi\cdot\Grad_s\Y+\boldsymbol\varphi\cdot\Y)\,d\s=\langle\llambda,\Y\rangle,
\quad\forall\Y\in\Hub.
\end{equation}
%
%
%
We observe that the integral on the left hand side is the scalar
product in $\Hub$ and that
it is easy to show that there exists a unique $\boldsymbol\varphi\in\Hub$
satisfying~\eqref{eq:isometria} with the following relation
\[
\|\boldsymbol\varphi\|_{\Hub}=\|\llambda\|_{\Hubd}.
\]
Hence we can also define $\c_1$ and $\c_2$ as follows.
We set $\Lambda=\Hub$, and 
\begin{equation}
\label{eq:defc2}
\aligned
&\NEW{\c_1(\llambda,\Z)=\int_{\B}(\Grad_s\llambda\cdot\Grad_s\Z+\llambda\cdot\Z)\,d\s}
\quad
&&\NEW{\llambda\in\Lambda=\Hub,\ \Z\in\Hub}\\
&\c_2(\llambda,\Y)=\int_{\B}(\Grad_s\llambda\cdot\Grad_s\Y+\llambda\cdot\Y)\,d\s
&&\llambda\in\Lambda=\Hub,\ \Y\in\Hub.
\endaligned
\end{equation}
Using either definition~\eqref{eq:defc1} or~\eqref{eq:defc2},
equation~\eqref{eq:ode} can be written variationally as
\begin{equation}
\label{eq:constraint}
\c_1(\mmu,\ucX)-\c_2\left(\mmu,\frac{\partial\X}{\partial t}(t)\right)=0
\quad\forall\mmu\in \Lambda.
\end{equation}
Let us introduce a Lagrange multiplier $\llambda(t)\in\Lambda$ associated to the
constraint~\eqref{eq:constraint} and split equation~\eqref{eq:NS1} as follows:
\begin{equation}
\label{eq:split}
\aligned
&\rho_f\frac d {dt}(\u(t),\v)+b(\u(t),\u(t),\v)+a(\u(t),\v)+
\c_1(\llambda(t),\vcX)=0&&\forall\v\in(\Huo)^d\\
&\dr\left(\frac{\partial^2\X}{\partial t^2}(t),\Y\right)_{\B}+
(\P(\F(t)),\Grad_s\Y)_{\B}-\c_2(\llambda(t),\Y)=0&&\forall\Y\in\Hub,
\endaligned
\end{equation}
where we have denoted by $(\cdot,\cdot)_{\B}$ the scalar product in
$L^2(\B)^d$.
Then Problem~\ref{pb:pbvar} can be written as follows.
\begin{problem}
\label{pb:DLM}
Given $\u_0\in(\Huo)^d$ and $\X_0\in W^{1,\infty}(\B)$,
find $(\u(t),p(t))\in(\Huo)^d\times\Ldo$, $\X(t) \in\Hub$, and
$\llambda(t)\in\Lambda$,
such that for almost every $t\in]0,T[$ it holds
\begin{subequations}
\begin{alignat}{2}
  &\rho_f\frac d {dt}(\u(t),\v)+b(\u(t),\u(t),\v)+a(\u(t),\v)\qquad\notag&&\\
  &\qquad-(\div\v,p(t))+\c_1(\llambda(t),\vcX)=0
   &&\quad\forall\v\in(\Huo)^d
     \label{eq:NS1_DLM}\\
  &(\div\u(t),q)=0&&\quad\forall q\in\Ldo
     \label{eq:NS2_DLM}\\
  &\dr\left(\frac{\partial^2\X}{\partial t^2}(t),\Y\right)_{\B}+
(\P(\F(t)),\Grad_s\Y)_{\B}-\c_2(\llambda(t),\Y)=0&&\quad\forall\Y\in\Hub
     \label{eq:solid_DLM}\\
  &\c_1(\mmu,\ucX)-\c_2\left(\mmu,\frac{\partial\X}{\partial t}(t)\right)
   =0 &&\quad\forall\mmu\in \Lambda
     \label{eq:ode_DLM}\\
  &\u(0)=\u_0\quad\mbox{\rm in }\Omega,\qquad\X(0)=\X_0\quad\mbox{\rm in }\B.
     \label{eq:ci_DLM}
\end{alignat}
\end{subequations}
\end{problem}
It is easy to check that Problems~\ref{pb:pbvar} and~\ref{pb:DLM} are
equivalent if $\X(t)\in (W^{1,\infty}(\B))^d$.
Moreover, we have the following energy estimate.
\begin{proposition}
\label{pr:energy_DLM}
For almost every $t\in]0,T[$, let $\u(t)\in(\Huo)^d$ and $\X(t)\in\Hub$ be
solution of Problem~\ref{pb:DLM} with $\partial\X(t)/\partial t\in(L^2(\B))^d$
then the following energy estimate holds true
\begin{equation}
\label{eq:energy_DLM}
\frac{\rho_f}2\frac{d}{dt}||\u(t)||^2_0+\nu||\grad\u(t)||^2_0+
\frac{\dr}2\frac{d}{dt}\left\|\frac{\partial \X}{\partial t}\right\|^2_{0,\B}
+\frac{d}{dt}E(\X(t))=0.
\end{equation}
\end{proposition}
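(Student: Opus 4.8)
The plan is to mimic the derivation of the continuous energy estimate from Proposition~\ref{le:stabcont}, but exploiting the new duality structure so that the Lagrange multiplier terms cancel. First I would test the three equations of Problem~\ref{pb:DLM} with the natural choices: take $\v=\u(t)$ in~\eqref{eq:NS1_DLM}, take $q=p(t)$ in~\eqref{eq:NS2_DLM}, take $\Y=\partial\X/\partial t(t)$ in~\eqref{eq:solid_DLM}, and take $\mmu=\llambda(t)$ in~\eqref{eq:ode_DLM}. From~\eqref{eq:NS2_DLM} the pressure term $(\div\u(t),p(t))$ vanishes, and the skew-symmetry $b(\u,\u,\u)=0$ kills the convective term, exactly as in the classical case. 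This leaves, from~\eqref{eq:NS1_DLM},
\[
\rho_f\frac{d}{dt}(\u(t),\u(t))\tfrac12\cdot 2 + a(\u(t),\u(t)) + \c_1(\llambda(t),\u(\X(\cdot,t),t)) = 0,
\]
i.e. $\frac{\rho_f}{2}\frac{d}{dt}\|\u(t)\|_0^2 + \nu\|\Grads\u(t)\|_0^2 + \c_1(\llambda(t),\ucX)=0$ (using $a(\u,\u)=\mmu(\Grads\u,\Grads\u)$ and recalling $\mmu=\nu$ here; I will write $\nu\|\grad\u\|_0^2$ in accordance with~\eqref{eq:energy_DLM}).

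Next, from~\eqref{eq:solid_DLM} with $\Y=\partial\X/\partial t(t)$,
\[
\dr\Bigl(\frac{\partial^2\X}{\partial t^2}(t),\frac{\partial\X}{\partial t}(t)\Bigr)_{\B} + \Bigl(\P(\F(t)),\Grad_s\frac{\partial\X}{\partial t}(t)\Bigr)_{\B} - \c_2\Bigl(\llambda(t),\frac{\partial\X}{\partial t}(t)\Bigr)=0.
\]
The first term is $\frac{\dr}{2}\frac{d}{dt}\|\partial\X/\partial t\|_{0,\B}^2$. For the second term I would use the chain rule together with~\eqref{eq:1st-pk}: since $\P(\F)_{\alpha i}=\partial W/\partial\F_{\alpha i}(\F)$ and $\Grad_s(\partial\X/\partial t)=\partial\F/\partial t$, one has $\P(\F(t)):\partial\F/\partial t(t)=\frac{d}{dt}W(\F(\s,t))$ pointwise, hence $(\P(\F(t)),\Grad_s\partial\X/\partial t(t))_{\B}=\frac{d}{dt}\int_\B W(\F(\s,t))\,\ds=\frac{d}{dt}E(\X(t))$ by~\eqref{eq:potenergy}.

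Finally I would add the two resulting identities. The remaining multiplier terms are $\c_1(\llambda(t),\ucX)$ from the fluid equation and $-\c_2(\llambda(t),\partial\X/\partial t(t))$ from the solid equation; their sum is precisely $\c_1(\llambda(t),\ucX)-\c_2(\llambda(t),\partial\X/\partial t(t))$, which vanishes by~\eqref{eq:ode_DLM} tested with $\mmu=\llambda(t)$. This yields~\eqref{eq:energy_DLM} directly. The only genuine subtlety — and the step I would treat most carefully — is the regularity needed to justify the chain-rule identity for $\frac{d}{dt}W(\F)$ and the differentiation under the integral sign; this requires $\X$ smooth enough in time and $W\in C^1$ (Assumption~\ref{ass}), and one should note that convexity of $W$ is not actually needed for this particular identity (it will matter later for the fully discrete scheme). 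Likewise, testing~\eqref{eq:ode_DLM} with $\mmu=\llambda(t)$ is legitimate precisely because $\llambda(t)\in\Lambda$, which is the whole point of introducing the multiplier. Everything else is the routine bookkeeping already present in the proof of Proposition~\ref{le:stabcont}.
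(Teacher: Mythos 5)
Your proposal is correct and follows essentially the same route as the paper's own proof: test with $\v=\u(t)$, $q=p(t)$, $\Y=\partial\X/\partial t$, use $b(\u,\u,\u)=0$, identify $(\P(\F),\Grad_s(\partial\X/\partial t))_{\B}$ with $\frac{d}{dt}E(\X(t))$ via the chain rule and~\eqref{eq:1st-pk}--\eqref{eq:potenergy}, and cancel the multiplier terms by taking $\mmu=\llambda(t)$ in~\eqref{eq:ode_DLM}. Your added observations on regularity and on convexity being unnecessary at the continuous level are accurate but not part of the paper's argument.
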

\begin{proof}
Taking $\v=\u(t)$ in~\eqref{eq:NS1_DLM} and $q=p(t)$ in~\eqref{eq:NS2_DLM}, and
recalling that $b(\u,\u,\u)=0$ by definition, we get
\[
\frac{\rho_f}2\frac d{dt}\|\u(t)\|^2_0+\nu\|\Grads\u\|^2_0+
\c_1(\llambda(t),\ucX)=0.
\]
Next we consider~\eqref{eq:solid_DLM} and take
$\Y=\partial\X(t)/\partial t$, obtaining
\[
\frac{\dr}2\frac d{dt}\left\|\frac{\partial\X}{\partial t}(t)\right\|^2_{0,\B}+
\left(\P(\F(t)),\Grad_s\left(\frac{\partial\X}{\partial t}(t)\right)\right)_{\B}
-\c_2\left(\llambda(t),\frac{\partial\X}{\partial t}(t)\right)=0.
\]
Recalling the definition of the energy density and of the elastic potential
energy~\eqref{eq:1st-pk}-\eqref{eq:potenergy}, we have
\[
\begin{split}
\left(\P(\F(t)),\Grad_s\left(\frac{\partial\X}{\partial
t}(t)\right)\right)_{\B}
&=
\int_{\B} \frac {\partial W}{\partial\FF}(\FF(\s,t))
\frac{\partial}{\partial t}\Grad_s\X(\s,t)\ds\\
&=\int_\B\frac{\partial W}{\partial\FF}(\FF(\s,t))
\frac{\partial\FF}{\partial t}(\s,t)\ds\\
&=\frac d{dt}\int_\B W(\FF(\s,t))\ds=
\frac d{dt}\left(E\left(\X(t)\right)\right).
\end{split}
\]
Combining the last equations and taking into account~\eqref{eq:ode_DLM} we
arrive at~\eqref{eq:energy_DLM}.
\end{proof}
\subsection{Thin structures}
Let us now set $m=d-1$; we observe that equation~\eqref{eq:ode} has to be
intended in the sense of traces of functions in $(\Huo)^d$. Assuming that,
for a.e. $t\in]0,T[$, $\X(t)\in(W^{1,\infty}(\B))^d$, the trace of $\u(t)$ along
$\B_t$ belongs to $(H^{1/2}(\B_t))^d$ or equivalently $\ucX\in(H^{1/2}(\B))^d$.
Hence we can set $\NEW{\Acca_1=\Acca_2=H^{1/2}(\B)^d}$, and
$\Lambda=(H^{1/2}(\B)^d)'$. Then a natural
definition for the forms $\c_1$ and $\c_2$ can be
\[
\aligned
&\NEW{\c_1(\llambda,\Z)=\langle\llambda,\Z\rangle \quad}&&
\NEW{\llambda\in\Lambda,\ \Z\in H^{1/2}(\B)^d}\\
&\c_2(\llambda,\Y)=\langle\llambda,\Y\rangle
&&\llambda\in\Lambda,\ \Y\in\NEW{H^{1/2}(\B)^d}.
\endaligned
\]
Then working as in the previous subsection we can write
Problem~\ref{pb:pbvar} in the equivalent form given by Problem~\ref{pb:DLM}
provided we use the correct bilinear forms $\c_1$ and $\c_2$.
Proposition~\ref{pr:energy_DLM} holds true without any modifications.

\section{Time advancing scheme}
\label{se:timescheme}
In this section we introduce a semi-discretization in time of
Problem~\ref{pb:DLM}. 
Let us subdivide
the time interval $[0,T]$ into $N$ equal parts and let $\dt$ be the
corresponding time step. For $n=0,\dots,N$ let $t_n=n\dt$; by the superscript
$n$ we indicate the value of an unknown function at time $t_n$.
Then a fully implicit scheme reads:
given $\u_0\in(\Huo)^d$ and $\X_0\in W^{1,\infty}(\B)$, for $n=1,\dots,N$
find $(\u^n,p^n)\in(\Huo)^d\times\Ldo$, $\X^n \in\Hub$, and
$\llambda^n\in\Lambda$, such that
\[
\aligned
  &\rho_f\left(\frac{\u^{n+1}-\u^n}{\dt},\v\right)
   +b(\u^{n+1},\u^{n+1},v)+a(\u^{n+1},\v)\\
  &\qquad\qquad\qquad
-(\div\v,p^{n+1})+\c_1(\llambda^{n+1},\NEW{\v}(\X^{n+1}))=0
   &&\forall\v\in(\Huo)^d\\
  &(\div\u^{n+1},q)=0&&\forall q\in\Ldo\\
  &\dr\left(\frac{\X^{n+1}-2\X^n+\X^{n-1}}{\dt^2},\Y\right)_{\B}+
   (\P(\F^{n+1}),\Grad_s\Y)_{\B}\\
&\qquad\qquad\qquad-\c_2(\llambda^{n+1},\Y)=0
     &&\forall\Y\in\Hub\\
  &\c_1(\mmu,\u^{n+1}(\X^{n+1}))-\c_2\left(\mmu,\frac{\X^{n+1}-\X^n}{\dt}\right)
   =0 &&\forall\mmu\in\Lambda.
\endaligned
\]
The main trouble \NEW{in view of the space discretization of} the above system
consists in the computation of the terms
involving functions belonging to $\Huo$ evaluated along the structure through
the mapping $\X^{n+1}$.
\NEW{In particular, the contributions to the discrete matrix involving the
term $\v(\X^{n+1})$ would require the evaluation of the shape functions along
$\X^{n+1}$ which has not yet been computed}.
 We then choose to evaluate these terms using the
mapping at the previous time step. Analogously, the convective term is
linearized by computing the transport velocity at the previous step.
The scheme is then modified as follows.
\begin{problem}
\label{pb:DLM_sd}
Given $\u_0\in(\Huo)^d$ and $\X_0\in W^{1,\infty}(\B)$, for $n=1,\dots,N$
find $(\u^n,p^n)\in(\Huo)^d\times\Ldo$, $\X^n \in\Hub$, and
$\llambda^n\in\Hubd$,
such that
\begin{subequations}
\begin{alignat}{2}
  &\rho_f\left(\frac{\u^{n+1}-\u^n}{\dt},\v\right)
   +b(\u^n,\u^{n+1},v)+a(\u^{n+1},\v)\notag&&\\
  &\qquad\qquad\qquad
-(\div\v,p^{n+1})+\c_1(\llambda^{n+1},v(\X^n))=0
   &&\forall\v\in(\Huo)^d
   \label{eq:NS1sd}\\
  &(\div\u^{n+1},q)=0&&\forall q\in\Ldo
   \label{eq:NS2sd}\\
  &\dr\left(\frac{\X^{n+1}-2\X^n+\X^{n-1}}{\dt^2},\Y\right)_{\B}+
   (\P(\F^{n+1}),\Grad_s\Y)_{\B}\notag&&\\
&\qquad\qquad\qquad-\c_2(\llambda^{n+1},\Y)=0
     &&\forall\Y\in\Hub
   \label{eq:solidsd}\\
  &\c_1(\mmu,\u^{n+1}(\X^n))-\c_2\left(\mmu,\frac{\X^{n+1}-\X^n}{\dt}\right)
   =0 &&\forall\mmu\in\Lambda.
\label{eq:odesd}
\end{alignat}
\end{subequations}
\end{problem}
In the previous problem, $\X^1$ can be computed, for instance, by assuming
formally $\X^{-1}=0$.

In the next proposition we prove an energy estimate similar
to~\eqref{eq:energy_DLM} when the potential
energy density $W$ is convex (see Assumption~\ref{ass}).
\begin{proposition}
\label{pr:energy_sd}
Let Assumption~\ref{ass} hold and let $\u^n\in(\Huo)^d$ and $\X^n\in\Hub$
for $n=0,\dots,N$ satisfy Problem~\ref{pb:DLM_sd} with
$\X^n\in(W^{1,\infty}(\B))^d$, then
the following estimate holds true for all $n=0,\dots,N-1$
\begin{equation}
\label{eq:energy_sd}
\aligned
&\frac{\rho_f}{2\dt}\left(\|\u^{n+1}\|^2_0-\|\u^n\|^2_0\right)+
\nu\|\Grads\u^{n+1}\|^2_0\\
&+\frac{\dr}{2\dt}\left(\left\|\frac{\X^{n+1}-\X^n}{\dt}\right\|^2_{0,\B}
-\left\|\frac{\X^n-\X^{n-1}}{\dt}\right\|^2_{0,\B}\right)+
\frac{E(\X^{n+1})-E(\X^n)}{\dt} \le0
\endaligned
\end{equation}
\end{proposition}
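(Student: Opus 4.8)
The plan is to mimic the proof of Proposition~\ref{pr:energy_DLM} at the discrete level, exploiting the fact that the terms evaluated along the structure are all written through the \emph{same} mapping $\X^n$, so that the Lagrange multiplier contributions cancel exactly.

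First I would take $\v=\u^{n+1}$ in~\eqref{eq:NS1sd} and $q=p^{n+1}$ in~\eqref{eq:NS2sd}. Since $b(\u^n,\u^{n+1},\u^{n+1})=0$ by the skew-symmetric definition of $b$, and since
\[
\rho_f\left(\frac{\u^{n+1}-\u^n}{\dt},\u^{n+1}\right)
\ge\frac{\rho_f}{2\dt}\left(\|\u^{n+1}\|^2_0-\|\u^n\|^2_0\right),
\]
(with equality if one expands; here the elementary identity $2(a-b)a=a^2-b^2+(a-b)^2$ gives in fact an equality plus a nonnegative remainder, which we simply drop), we obtain
\[
\frac{\rho_f}{2\dt}\left(\|\u^{n+1}\|^2_0-\|\u^n\|^2_0\right)
+\nu\|\Grads\u^{n+1}\|^2_0
+\c_1(\llambda^{n+1},\u^{n+1}(\X^n))\le0.
\]
Next I would take $\Y=(\X^{n+1}-\X^n)/\dt$ in~\eqref{eq:solidsd}. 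The inertial term is handled by the same algebraic identity applied to the sequence $(\X^{n+1}-\X^n)/\dt$:
\[
\dr\left(\frac{\X^{n+1}-2\X^n+\X^{n-1}}{\dt^2},\frac{\X^{n+1}-\X^n}{\dt}\right)_{\B}
\ge\frac{\dr}{2\dt}\left(\left\|\frac{\X^{n+1}-\X^n}{\dt}\right\|^2_{0,\B}
-\left\|\frac{\X^n-\X^{n-1}}{\dt}\right\|^2_{0,\B}\right).
\]
Finally, choosing $\mmu=\llambda^{n+1}$ in~\eqref{eq:odesd} yields
$\c_1(\llambda^{n+1},\u^{n+1}(\X^n))=\c_2(\llambda^{n+1},(\X^{n+1}-\X^n)/\dt)$, so the multiplier terms coming from the fluid equation and from the solid equation are equal and opposite: adding the three relations, they cancel, leaving
\[
\frac{\rho_f}{2\dt}\left(\|\u^{n+1}\|^2_0-\|\u^n\|^2_0\right)
+\nu\|\Grads\u^{n+1}\|^2_0
+\frac{\dr}{2\dt}\left(\left\|\frac{\X^{n+1}-\X^n}{\dt}\right\|^2_{0,\B}
-\left\|\frac{\X^n-\X^{n-1}}{\dt}\right\|^2_{0,\B}\right)
+\left(\P(\F^{n+1}),\Grad_s\left(\frac{\X^{n+1}-\X^n}{\dt}\right)\right)_{\B}\le0.
\]

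The one genuinely new ingredient compared with the continuous case — and the step I expect to be the main obstacle — is the elastic term. In the continuous proof one uses the chain rule to write $(\P(\F),\partial_t\Grad_s\X)_{\B}=\frac{d}{dt}E(\X(t))$; at the discrete level the time derivative is replaced by a difference quotient and there is no exact chain rule. Here is where Assumption~\ref{ass} enters: since $W$ is $C^1$ and convex, for any tensors $\F^{n+1},\F^n$ one has the subgradient inequality
\[
W(\F^n)\ge W(\F^{n+1})+\frac{\partial W}{\partial\F}(\F^{n+1}):(\F^n-\F^{n+1})
=W(\F^{n+1})+\P(\F^{n+1}):(\F^n-\F^{n+1}),
\]
using~\eqref{eq:1st-pk}. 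Integrating over $\B$ and using $\Grad_s\X^{n+1}-\Grad_s\X^n=\F^{n+1}-\F^n$ together with~\eqref{eq:potenergy} gives
\[
\left(\P(\F^{n+1}),\Grad_s\left(\frac{\X^{n+1}-\X^n}{\dt}\right)\right)_{\B}
=\frac1{\dt}\int_\B\P(\F^{n+1}):(\F^{n+1}-\F^n)\,\ds
\ge\frac{E(\X^{n+1})-E(\X^n)}{\dt}.
\]
Substituting this lower bound into the inequality above produces exactly~\eqref{eq:energy_sd}, which completes the proof. I would close by remarking that convexity is used only to pass from the (exact) elastic work term to the energy increment, and that this is precisely the discrete analogue of the chain-rule identity employed in Proposition~\ref{pr:energy_DLM}.
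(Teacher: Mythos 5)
Your proposal is correct and follows essentially the same route as the paper's proof: the same test functions, the same algebraic identity for the inertial terms, the cancellation of the multiplier terms via \eqref{eq:odesd} with $\mmu=\llambda^{n+1}$, and convexity of $W$ to bound the elastic work from below by the energy increment. The only cosmetic difference is that you invoke the gradient inequality for the convex function $W$ directly, whereas the paper restricts $W$ to the segment joining $\F^n$ and $\F^{n+1}$ and uses $\WW'(1)\ge\WW(1)-\WW(0)$; these are the same argument.
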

\begin{proof}
Working similarly as in the continuous case, we take $\v=\u^{n+1}$
in~\eqref{eq:NS1sd}, using~\eqref{eq:NS2sd} and the fact that
$b(\u^n,\u^{n+1},\u^{n+1})=0$ by definition, we have
\[
\rho_f\left(\frac{\u^{n+1}-\u^n}{\dt},\u^{n+1}\right)
+a(\u^{n+1},\u^{n+1})
+\c_1(\llambda^{n+1},\u^{n+1}(\X^n))=0.
\]
The well-known identity $2(x-y)x=x^2+(x-y)^2-y^2$ and the definition of
the bilinear form $a$ imply
\begin{equation}
\label{eq:uno}
\frac{\rho_f}{2\dt}\left(\|\u^{n+1}\|^2_0-\|\u^n\|^2_0\right)+
\nu\|\Grads\u^{n+1}\|^2_0+
\c_1(\llambda^{n+1},\u^{n+1}(\X^n))\le0.
\end{equation}
Let us now take $\Y=(\X^{n+1}-\X^n)/\dt$ in~\eqref{eq:solidsd} and
recall that $\Grad_s\X=\F$ so that
\begin{equation}
\label{eq:due}
\aligned
\frac{\dr}{\dt}&
\left(\frac{\X^{n+1}-\X^n}{\dt}-\frac{\X^n-\X^{n-1}}{\dt},
\frac{\X^{n+1}-\X^n}{\dt}\right)_{\B}\\
&+
\left(\P(\F^{n+1}),\frac{\F^{n+1}-\F^n}{\dt}\right)_{\B}
-\c_2\left(\llambda^{n+1},\frac{\X^{n+1}-\X^n}{\dt}\right)=0.
\endaligned
\end{equation}
Let us discuss in detail the second nonlinear term. We want to show its
relation with the elastic potential energy~\eqref{eq:potenergy}.
We define $\WW:[0,1]\to \RE$ as
\[
\WW(t)\mathrel{\mathop:}=W(\F^n+t(\F^{n+1}-\F^n)),
\]
hence by chain rule we have thanks to~\eqref{eq:1st-pk}
\[
\WW'(t)=\P(\F^n+t(\F^{n+1}-\F^n)):(\F^{n+1}-\F^n).
\]
Thanks to Assumption~\ref{ass}, $\WW$ is convex, therefore we have
that $\WW'(1)\ge\WW(1)-\WW(0)$ from which we obtain
\[
\aligned
\left(\P(\F^{n+1}),\frac{\F^{n+1}-\F^n}{\dt}\right)_{\B}&=
\frac1{\dt}\int_\B\WW'(1)\ds\\
&\ge\frac1{\dt}\int_{\B}(\WW(1)-\WW(0))\ds=
\frac1{\dt}(E(\X^{n+1})-E(\X^n)).
\endaligned
\]
Inserting the last inequality in~\eqref{eq:due} with standard
computations, we arrive at
\begin{equation}
\label{eq:tre}
\aligned
&\frac{\dr}{2\dt}\left(
\left\|\frac{\X^{n+1}-\X^n}{\dt}\right\|^2_{0,\B}-
\left\|\frac{\X^n-\X^{n-1}}{\dt}\right\|^2_{0,\B}\right)\\
&+\frac1{\dt}(E(\X^{n+1})-E(\X^n))-
\c_2\left(\llambda^{n+1},\frac{\X^{n+1}-\X^n}{\dt}\right)\le0.
\endaligned
\end{equation}
Summing up~\eqref{eq:uno} and~\eqref{eq:tre} and taking into
account~\eqref{eq:odesd}, we arrive at~\eqref{eq:energy_sd}.
\end{proof}
We observe that the energy estimate reported in Proposition~\ref{pr:energy_sd}
does not require any limitation on the time step.

\section{Finite element discretization}
\label{se:fe_FD}
In this section we introduce the finite element discretization of
Problem~\ref{pb:DLM_sd}. For this we
consider a family $\mathcal{T}_h$ of regular meshes in $\Omega$ and a family
$\mathcal{S}_h$ of regular meshes in $\B$. We denote by $h_x$ and $h_s$ the
meshsize of $\mathcal{T}_h$ and $\mathcal{S}_h$, respectively.
Let $\Vh\subseteq(\Huo)^d$ and $\Qh\subseteq\Ldo$ be finite
element spaces which satisfy the usual discrete ellipticity on the kernel and
the discrete inf-sup conditions for the Stokes problem~\cite{bbf}.
Moreover, we consider finite dimensional subspaces $\Sh\subseteq\Hub$ and
$\Lh\subseteq\Lambda$. Then the finite element counterpart of
Problem~\ref{pb:DLM_sd} reads.
\begin{problem}
\label{pb:DLMh}
Given $\u_{0h}\in\Vh$ and $\X_0\in W^{1,\infty}(\B)$, for $n=1,\dots,N$
find $\u_h^n,p_h^n\in\Vh\times\Qh$, $\X_h^n\in\Sh$, and
$\lh^n\in\Lh$,
such that
\begin{subequations}
\begin{alignat}{2}
  &\rho_f\left(\frac{\u_h^{n+1}-\u_h^n}{\dt},\v\right)
   +b(\u_h^n,\u_h^{n+1},v)+a(\u_h^{n+1},\v)\notag&&\\
  &\qquad\qquad\qquad
-(\div\v,p_h^{n+1})+\c_1(\lh^{n+1},v(\X_h^n))=0\qquad
   &&\forall\v\in\Vh
   \label{eq:NS1h}\\
  &(\div\u_h^{n+1},q)=0&&\forall q\in\Qh
   \label{eq:NS2h}\\
  &\dr\left(\frac{\X_h^{n+1}-2\X_h^n+\X_h^{n-1}}{\dt^2},\Y\right)_{\B}+
   (\P(\F_h^{n+1}),\Grad_s\Y)_{\B}\notag&&\\
&\qquad\qquad\qquad-\c_2(\lh^{n+1},\Y)=0
     &&\forall\Y\in\Sh
   \label{eq:solidh}\\
  &\c_1(\mmu,\u_h^{n+1}(\X_h^n))-\c_2\left(\mmu,\frac{\X_h^{n+1}-\X_h^n}{\dt}\right)
   =0 &&\forall\mmu\in\Lh.
\label{eq:odeh}
\end{alignat}
\end{subequations}
where $\F_h^{n+1}=\Grad_s\X_h^{n+1}$.
\end{problem}
\NEW{
The well-posedness of Problem~\ref{pb:DLMh} is an open problem so far. It is
clear that this is related to the validity of suitable inf-sup conditions
(see~\cite{bbf}) and that this implies some compatibility conditions among the
finite element spaces. It is to be expected that such compatibilities may be
different in the case of thin or thick structures.
Some preliminary results have been obtained in~\cite{fd} and~\cite{ruggeri}
where a simplified two-dimensional model is considered mimicking the case of a
thick structure. In those papers, all finite element spaces contain continuous
piecewise linear (triangular) or bilinear (quadrilateral) functions.
}

With the same proof of Proposition~\ref{pr:energy_sd}, it can be shown that the
solution $\u_h^n\in\Vh$ and $\X_h^n\in\Sh$ satisfies
an energy estimate analogous to~\eqref{eq:energy_sd}.
\begin{proposition}
Let Assumption~\ref{ass} hold and let $u_h^n\in\Vh$ and $\X_h^n\in\Sh$ for
$n=0,\dots,N$ satisfy Problem~\ref{pb:DLMh}. Then the following
estimate holds true for all $n=0,\dots,N-1$
\begin{equation}
\label{eq:ener_discr}
\aligned
&\frac{\rho_f}{2\dt}\left(\|\u_h^{n+1}\|^2_0-\|\u_h^n\|^2_0\right)+
\mmu\|\Grads\u_h^{n+1}\|^2_0\\
+&\frac{\dr}{2\dt}\left(\left\|\frac{\X_h^{n+1}-\X_h^n}{\dt}\right\|^2_{0,\B}
-\left\|\frac{\X_h^n-\X_h^{n-1}}{\dt}\right\|^2_{0,\B}\right)+
\frac{E(\X_h^{n+1})-E(\X_h^n)}{\dt}
\le0.
\endaligned
\end{equation}
\end{proposition}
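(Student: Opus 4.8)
The plan is to mimic, essentially verbatim, the proof of Proposition~\ref{pr:energy_sd}, replacing every continuous/semidiscrete object by its finite element counterpart. The point worth stressing is that no additional hypothesis on the finite element spaces is needed: the cancellation of the multiplier terms is purely algebraic, coming from the symmetry of the way $\c_1$ and $\c_2$ enter~\eqref{eq:NS1h}, \eqref{eq:solidh} and~\eqref{eq:odeh}, and the convexity argument for the elastic term only uses that $\X_h^{n+1}\in(W^{1,\infty}(\B))^d$ so that $W(\F_h^{n+1})$ makes sense pointwise.

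First I would take $\v=\u_h^{n+1}\in\Vh$ in~\eqref{eq:NS1h} and $q=p_h^{n+1}\in\Qh$ in~\eqref{eq:NS2h}; these are legitimate choices since the test functions range over the full discrete spaces. Using $b(\u_h^n,\u_h^{n+1},\u_h^{n+1})=0$ (which holds by the skew-symmetric definition of $b$, independently of discretization) and the identity $2(x-y)x=x^2+(x-y)^2-y^2$ applied to the time-difference term, one obtains
\[
\frac{\rho_f}{2\dt}\left(\|\u_h^{n+1}\|^2_0-\|\u_h^n\|^2_0\right)+
\nu\|\Grads\u_h^{n+1}\|^2_0+
\c_1(\lh^{n+1},\u_h^{n+1}(\X_h^n))\le0,
\]
the inequality absorbing the nonnegative term $\tfrac{\rho_f}{2\dt}\|\u_h^{n+1}-\u_h^n\|^2_0$. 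Next I would take $\Y=(\X_h^{n+1}-\X_h^n)/\dt\in\Sh$ in~\eqref{eq:solidh}, again an admissible choice, and treat the three terms separately: the inertial term via the same algebraic identity (with a nonnegative remainder discarded), the multiplier term left untouched for now, and the elastic term handled by the convexity trick.

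For the elastic term I would introduce $\WW(t):=W(\F_h^n+t(\F_h^{n+1}-\F_h^n))$, compute $\WW'(t)=\P(\F_h^n+t(\F_h^{n+1}-\F_h^n)):(\F_h^{n+1}-\F_h^n)$ by the chain rule and~\eqref{eq:1st-pk}, and invoke Assumption~\ref{ass}: convexity of $W$ gives $\WW'(1)\ge\WW(1)-\WW(0)$, hence
\[
\left(\P(\F_h^{n+1}),\frac{\F_h^{n+1}-\F_h^n}{\dt}\right)_{\B}
\ge\frac1{\dt}\bigl(E(\X_h^{n+1})-E(\X_h^n)\bigr).
\]
Combining this with the inertial estimate yields the analogue of~\eqref{eq:tre} for the discrete solid, containing the term $-\c_2(\lh^{n+1},(\X_h^{n+1}-\X_h^n)/\dt)$. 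Finally I would add this to the fluid estimate and use~\eqref{eq:odeh} tested with $\mmu=\lh^{n+1}\in\Lh$ — this is exactly the step that makes $\c_1(\lh^{n+1},\u_h^{n+1}(\X_h^n))$ and $\c_2(\lh^{n+1},(\X_h^{n+1}-\X_h^n)/\dt)$ cancel — to arrive at~\eqref{eq:ener_discr}.

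There is essentially no obstacle here: since all admissible test functions in Problem~\ref{pb:DLMh} are taken over the full discrete spaces $\Vh$, $\Qh$, $\Sh$, $\Lh$, every substitution used in the continuous/semidiscrete proof is reproduced at the discrete level, and the cancellation of the Lagrange multiplier contributions is unaffected by the Galerkin restriction. The only point to verify is that $E(\X_h^{n+1})$ is well defined, which is guaranteed by the standing hypothesis $\X_h^n\in(W^{1,\infty}(\B))^d$; accordingly the proof is identical, word for word, to that of Proposition~\ref{pr:energy_sd}, and one may simply say so.
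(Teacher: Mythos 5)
Your proposal is correct and follows exactly the route the paper intends: the paper proves this proposition simply by remarking that the argument of Proposition~\ref{pr:energy_sd} carries over verbatim, since all the test functions used there ($\v=\u_h^{n+1}$, $q=p_h^{n+1}$, $\Y=(\X_h^{n+1}-\X_h^n)/\dt$, $\mmu=\lh^{n+1}$) remain admissible in the Galerkin subspaces and the multiplier cancellation and convexity argument are unchanged. Your additional observation that no compatibility condition on the discrete spaces is needed for the stability estimate is accurate and consistent with the paper's remark that well-posedness (inf-sup conditions) is a separate, open issue.
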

In the next section we shall present some numerical results obtained using the
scheme given by~\eqref{eq:NS1h}--\eqref{eq:odeh} where the term related to
the Piola--Kirchhoff tensor is linear. From now on, $\P(\F)$
will be defined as follows
\begin{equation}
\P(\F)=\kappa\F=\kappa\Grad_s\X,
\label{eq:piolalin}
\end{equation}
so that Problem~\ref{pb:DLMh} can be written in matrix form as follows:
\begin{equation}
\left(
\begin{array}{cc|c|c}
\mathsf{A}&\mathsf{B}^\top&0&\mathsf{L}_f(\X_h^n)^\top\\
\mathsf{B}&0&0&0\\
\hline\\[-10pt]
0&0&\mathsf{A}_s&-\mathsf{L}^\top_s\\
\hline\\[-10pt]
\mathsf{L}_f(\X_h^n)&0&-\mathsf{L}_s&0
\end{array}
\right)
\left(
\begin{array}{c}
\u_h^{n+1}\\p_h^{n+1}\\
\hline\\[-10pt]
\X_h^{n+1} \\
\hline\\[-10pt]
\lh^{n+1}
\end{array}
\right)=
\left(
\begin{array}{c}
\mathsf{f}\\0\\
\hline\\[-10pt]
\mathsf{g}\\
\hline\\[-10pt]
\mathsf{d}
\end{array}
\right)
\label{eq:matriciona}
\end{equation}
where, denoting by $\varphi$, $\psi$, $\chi$ and $\zeta$ the basis functions 
respectively in $\Vh$, $\Qh$, $\Sh$ and $\Lh$, we have used the following
notation:
\[
\aligned
&\mathsf{A}=\frac{\rho_f}{\dt}\mathsf{M}_f+\mathsf{K}_f
\quad\text{with }(\mathsf{M}_f)_{ij}=(\varphi_j,\varphi_i),
\ (\mathsf{K}_f)_{ij}=a(\varphi_j,\varphi_i)+b(\u_h^n,\varphi_j,\varphi_i)\\
&\mathsf{B}_{ki}=-(\div\varphi_i,\psi_k)\\
&\mathsf{A}_s=\frac{\dr}{\dt^2}\mathsf{M}_s+\mathsf{K}_s
\quad\text{with }(\mathsf{M}_s)_{ij}=(\chi_j,\chi_i)_{\B},
\ (\mathsf{K}_s)_{ij}=\kappa(\Grad_s\chi_j,\Grad_s\chi_i)_{\B}\\
&(\mathsf{L}_f(\X_h^n))_{lj}=\c_1(\zeta_l,\varphi_j(\X_h^n))\\
&(\mathsf{L}_s)_{lj}=\c_2(\zeta_l,\chi_j)\\
&\mathsf{f}_i=\frac{\rho_f}{\dt}(\mathsf{M}_f\u_h^n)_i\\
&\mathsf{g}_i=\frac{\dr}{\dt^2}\left(\mathsf{M}_s(2\X_h^n-\X_h^{n-1})\right)_i\\
&\mathsf{d}_l=-\frac1{\dt}(\mathsf{L}_s\X_h^n)_l.
\endaligned
\]
\NEW{
A natural question concerning system~\eqref{eq:matriciona} is to compare its
computational cost with the one related to the solution of the \feibm scheme.
It is clear that a naive implementation of the \ibmdlm is more expensive than
\feibm. It is out of the aims of this paper to investigate efficient solvers
for~\eqref{eq:matriciona}: this will be the object of future research. In this
framework, an interesting reference is~\cite{crosetto} where a general setting
for the numerical solution of FSI schemes is presented together with a
discussion on monolithic and partitioned approaches.
}

\section{Numerical experiments}
\label{se:indoeuropea}

In this section we perform a wide set of tests that numerically explore the
scheme characteristics. 
In all simulations reported in this section the velocity and 
pressure spaces are discretized using the enhanced Bercovier--Pironneau 
element introduced in~\cite{bcgg2012}, that is 
$P_1\mathrm{iso}\, P_2/(P_1+P_0)$ element. For $k>0$, $P_k$ stands for the
space of continuous piecewise polynomials of degree not greater than $k$ and
$P_0$ is the space of piecewise constants. 
Performances of the \ibmdlm scheme will be compared
to the classical pointwise \feibm.

\NEW{The matrix form of the \ibmdlm has been presented at the end of the
previous section (see Equation~\eqref{eq:matriciona}).}

In order to make the presentation clearer, let us recall the \feibm scheme
when the Piola--Kirchhoff tensor is defined as in~\eqref{eq:piolalin}\NEW{,
which turns out to be a time-space discretization of Problem~\ref{pb:pbvar}}.

For $n=0,\dots,N-1$ we perform the following three steps.

\begin{description}
\item[Step~1] Compute the \emph{fluid-structure interaction} force vector
$\mathsf{F}^{n+1}$
as 
\[
\mathsf{F}^{n+1}_i=-\kappa(\Grad_s\X_h^n,\varphi_i(\X^n_h))_\B
\]
\item[Step~2] Solve the Navier--Stokes equations
\[
\left(
 \begin{array}{c c}
 \mathsf{A}+\frac{\delta\rho}{\Delta t}\mathsf{M}_{\mathcal B}&\mathsf{B}^\top\\
 \mathsf{B} & 0
 \end{array}\right)
\left(
\begin{array}{c}
\u_h^{n+1}\\
p_h^{n+1}
\end{array}
\right)
 =
 \left(\begin{array}{c}
\mathsf{f}+\mathsf{F}^{n+1}+\frac{\delta\rho}{\Delta t}\mathsf{M}_{\B}\u_h^n\\
 0
 \end{array}
\right).
\] 
Here $(\mathsf{M}_{\B})_{ij} = (\varphi_j(\X_h^n),\varphi_i(\X_h^n))_{\B}$.\\
\item[Step~3] Update pointwise the structure position. For $M$ 
structure points, compute for the $i$-th point position:
\begin{equation}
\label{eq:odesemi}
\frac{\X_{hi}^{n+1}-\X_{hi}^{n}}{\dt}=\u_h^{n+1}(\X_{hi}^{n})
\quad\forall i=1,\ldots,M.
\end{equation}

\end{description}

\begin{figure}[h]
 \centering
 \subcaptionbox{$t = 0.1$.}
   {\includegraphics[width=5cm]{./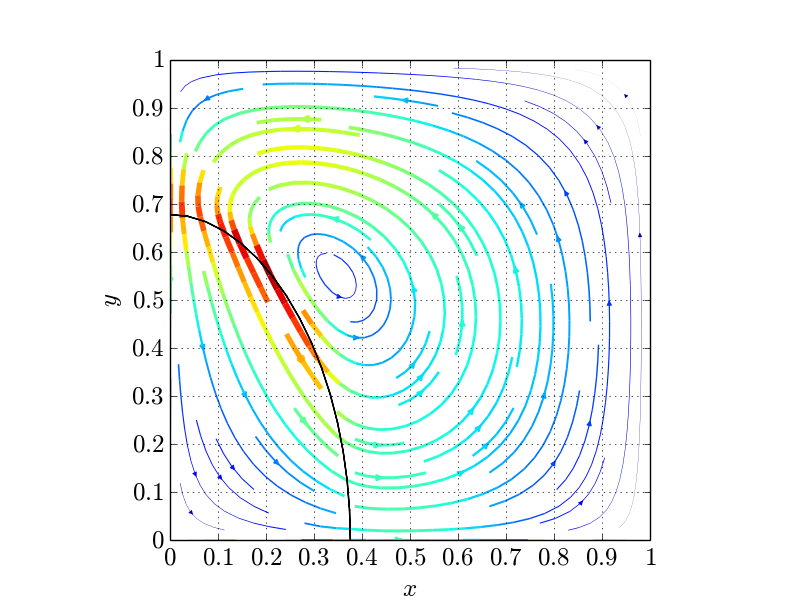}}
 \subcaptionbox{$t = 2$.}
   {\includegraphics[width=5cm]{./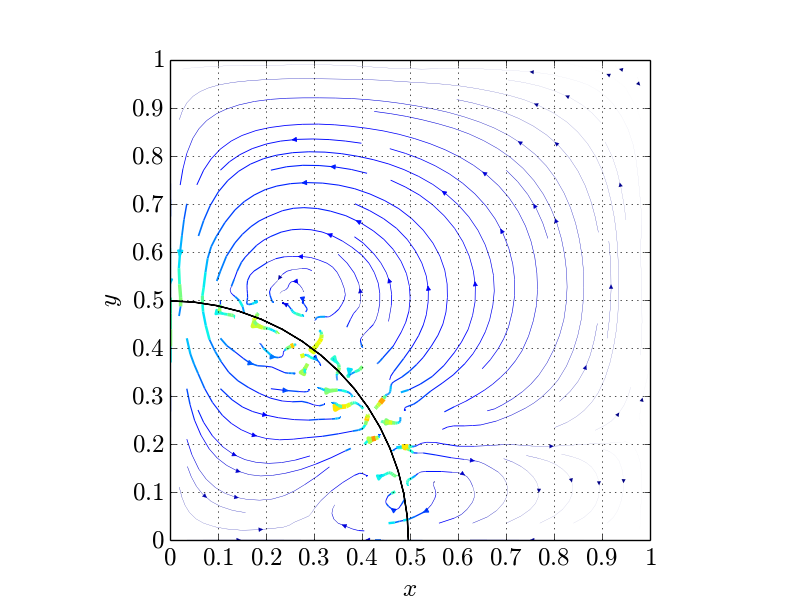}}
\caption{Codimension one structure position snapshots. The pictures represent
the velocity streamlines and the structure position for the first and final
time
steps. The streamline color pictures the velocity magnitude, red is the higher
value.}
 \label{fig:ellpse_vel_thin}
\end{figure}

\begin{figure}[h]
 \centering
 \subcaptionbox{$t = 0.1$.}
   {\includegraphics[scale=.3]{./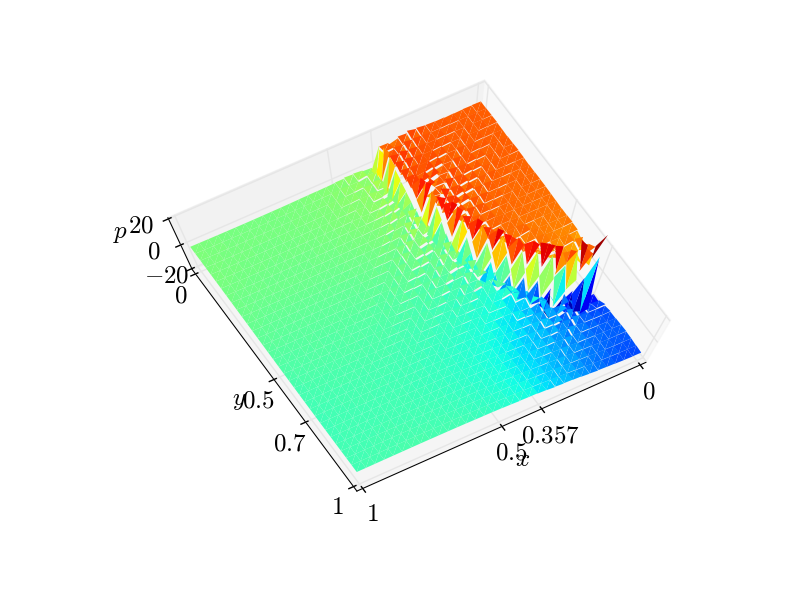}}
 \subcaptionbox{$t = 2$.}
   {\includegraphics[scale=.3]{./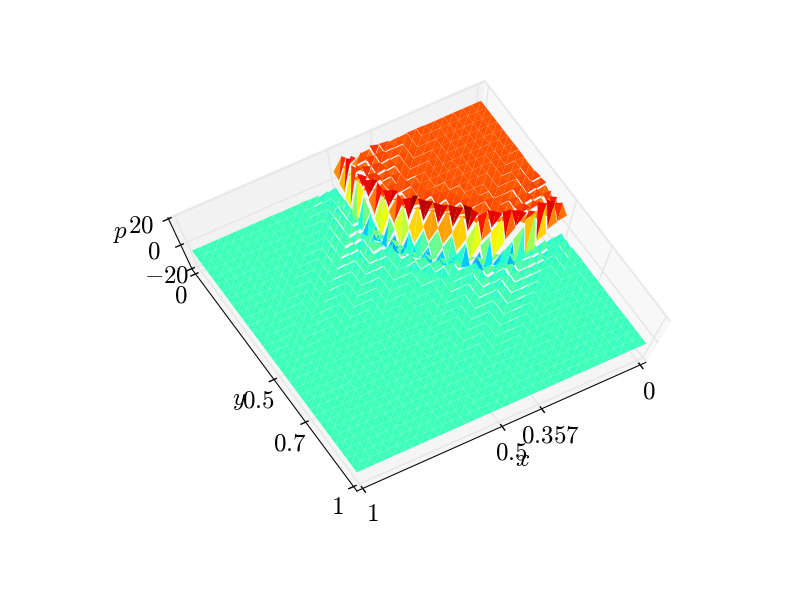}}
 \caption{Codimension one pressure snapshots. The pictures represent the 
 resulting pressure map for the first and final time 
 steps.}
 \label{fig:ellpse_prex_thin}
 \end{figure}

The first goal of our numerical experiments is to confirm the better
behavior of \ibmdlm with respect to \feibm for what the \cfl condition is
concerned. To this aim, we start recalling the \cfl condition that has been
proved in~\cite{BCG2011}. In two space dimensions, when the solid has
codimension one, the time step $\dt$ should be smaller than a multiple of
$h_x h_s$, while when the solid has codimension zero, then $\dt$ has to be
bounded by a multiple of $h_x$. On the other hand, in
Proposition~\ref{pr:energy_sd} we proved that the \ibmdlm is unconditionally
stable, that is no restriction on $\dt$ is required for its stability.
 
 \begin{figure}
 \centering
 \subcaptionbox{$t = 0.1$.}
    {\includegraphics[scale=.3]{./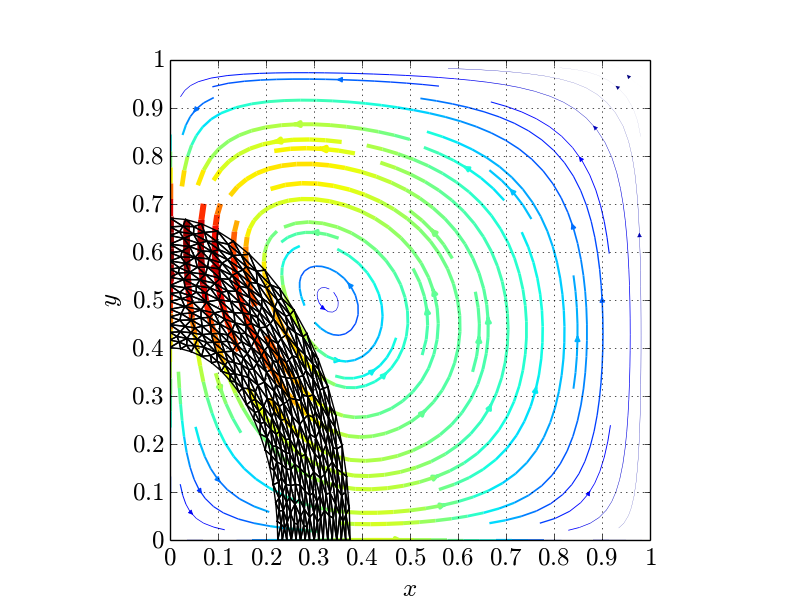}}
 \subcaptionbox{$t = 2$.}
   {\includegraphics[scale=.3]{./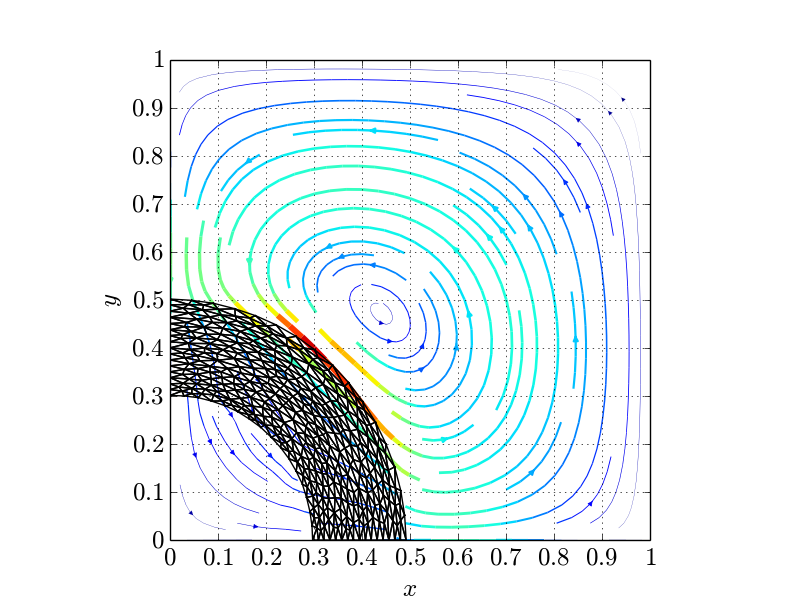}}
 \caption{Codimension zero structure position snapshots. The pictures represent the 
 velocity streamlines and the structure position for the first and final time 
 steps. The streamline color pictures the velocity magnitude, red is the higher value.}
 \label{fig:ellpse_vel_thick}
\end{figure}
 
\begin{figure}
 \centering
 \subcaptionbox{$t = 0.1$.}
   {\includegraphics[scale=.3]{./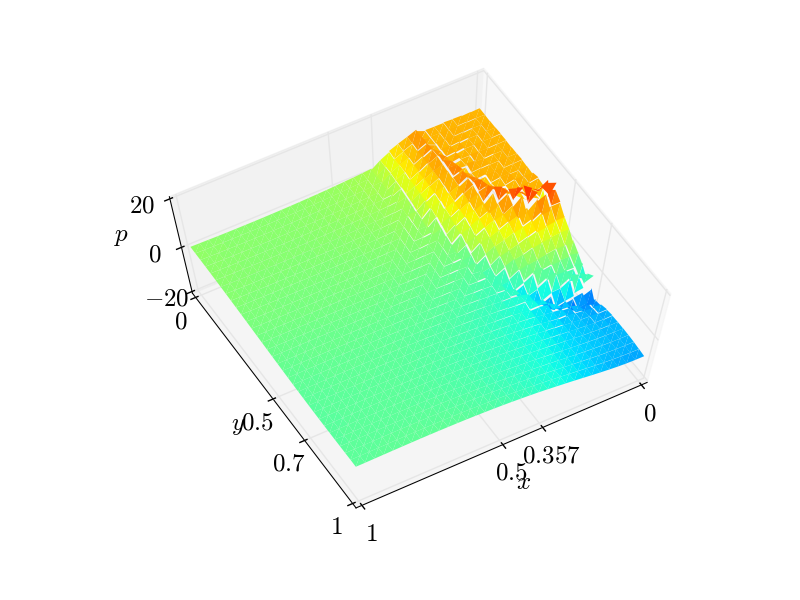}}
 \subcaptionbox{$t = 2$.}
   {\includegraphics[scale=.3]{./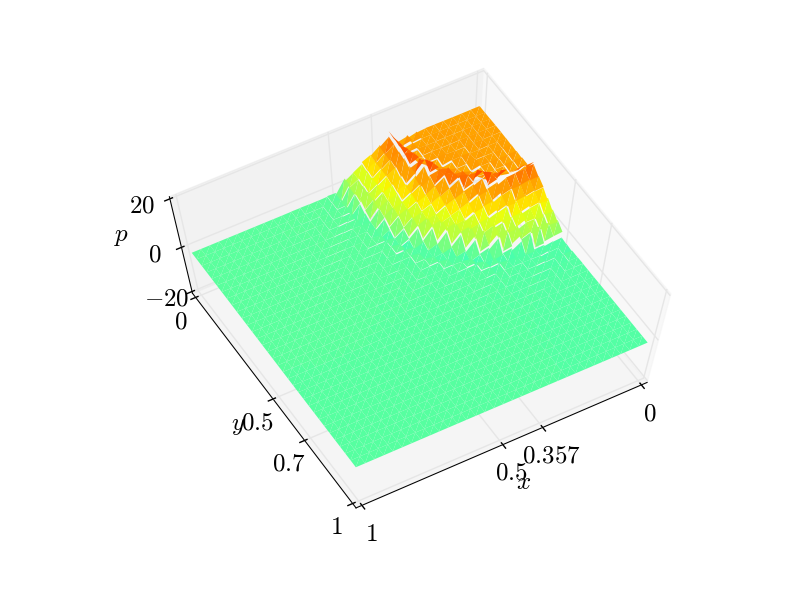}}
 \caption{Codimension zero pressure snapshots. The pictures represent the 
 resulting pressure map for the first and final time 
 steps.}
 \label{fig:ellpse_prex_thick}
 \end{figure}

A series of tests will be performed, using the classical benchmark problem of
an ellipsoidal structure that evolves to a circular equilibrium position.
The ellipsoid is centered at the midpoint of our (square) physical domain
and the initial fluid is at rest, so that we can reduce the computational
domain to a quarter of the physical one by symmetry conditions.
Figure~\ref{fig:ellpse_vel_thin} reports the initial and final configurations
of our test (codimension one) as a result of a computation performed with
\ibmdlm. The corresponding pressure is plotted in
Figure~~\ref{fig:ellpse_prex_thin}. Analogue plots are reported in
Figures~\ref{fig:ellpse_vel_thick} and~\ref{fig:ellpse_prex_thick},
respectively, in the case of codimension zero structure.
The $x$ and $y$ thicks on the pressure plot correspond to the initial and
final position of the structure.
 
In order to check the stability with respect to the time step, we consider
the fluid and structure kinematic and elastic energy:
\begin{equation}
\label{eq:ener_comp}
\Pi(\X_h^{n},\u_h^{n}) = \frac{\rho_f}{2}\|\u_h^{n}\|^2_0
+\frac{\dr}{2}\left\|\frac{\X_h^{n}-\X_h^{n-1}}{\dt}\right\|^2_{0,\B}
+
E(\X_h^{n}).
\end{equation}
We compute the energy ratio: $\Pi(\X_h^{n},\u_h^{n})/\Pi(\X_h^{0},\u_h^{0})$
for different parameter definitions as a function of time for both \feibm and
\ibmdlm computations.
Figures~\ref{fig:thin_energy_rho00} and~\ref{fig:thin_energy_rho03} show the
results in the case of the codimension one structure when $\delta\rho = 0$ and
$\delta\rho = 0.3$, respectively.
Figure~\ref{fig:thick_energy_rho03} reports on the case of codimension zero
structure when $\delta\rho = 0.3$. 

More precisely, Figures~\ref{fig:thin_energy_rho00}
and~\ref{fig:thin_energy_rho03} show the energy ratio for
a fixed $h_x=1/32$, and varying $h_s$ (row-wise) and $\dt$ (column-wise). It
is clear that the \feibm (dashed curve) has a blowing-up energy if $h_s$ is too
small compared to $\dt$.

Figure~\ref{fig:thick_energy_rho03} shows the energy ratio
for a fixed $h_s=1/8$, and varying $h_x$ (row-wise) and $\dt$ (column-wise).
It can be seen that the energy computed with the \feibm (dashed curve) blows
up if $\dt$ is not small enough with respect to $h_x$.

On the other hand, in all cases it can be appreciated that the \ibmdlm does
not need any constraint on $\dt$ in order to be stable,

 \begin{figure}
 \centering
 \subcaptionbox{\tiny{$\Delta t = 10^{-1}$, $h_s = 1/8$}.}
   {\includegraphics[scale=.2]{./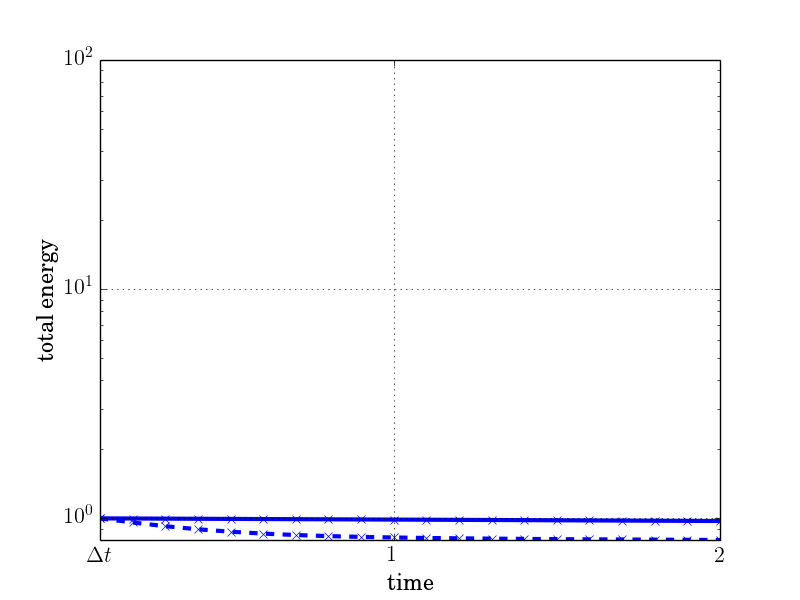}}
 \subcaptionbox{\tiny{$\Delta t = 10^{-1}$, $h_s = 1/16$}.}
   {\includegraphics[scale=.2]{./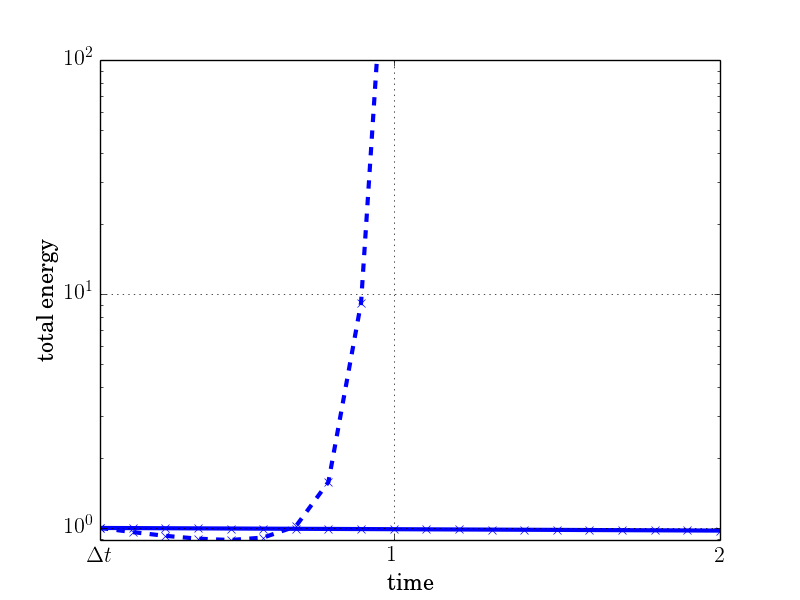}\label{subfig:cfl00}}
 \subcaptionbox{\tiny{$\Delta t = 10^{-1}$, $h_s = 1/32$}.}
   {\includegraphics[scale=.2]{./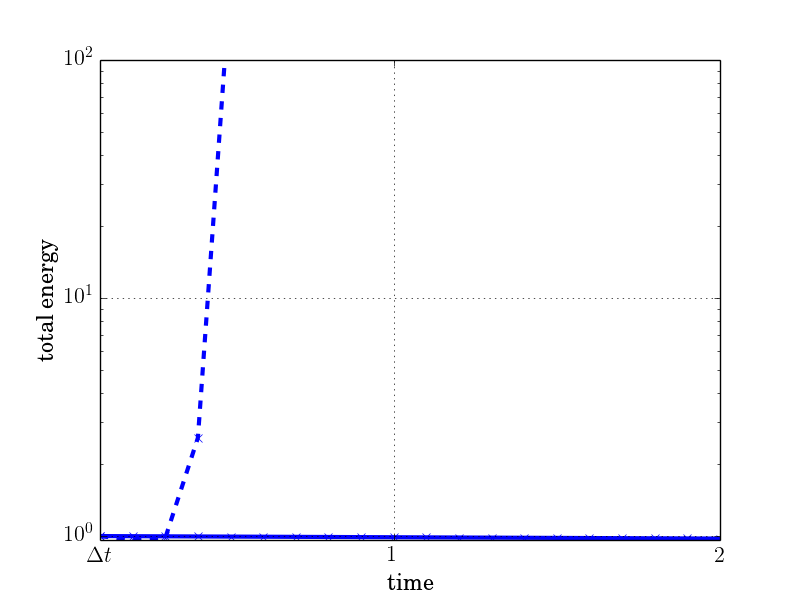}\label{subfig:cfl01}}\\
 \subcaptionbox{\tiny{$\Delta t = 5\cdot 10^{-2}$, $h_s = 1/8$}.}
   {\includegraphics[scale=.2]{./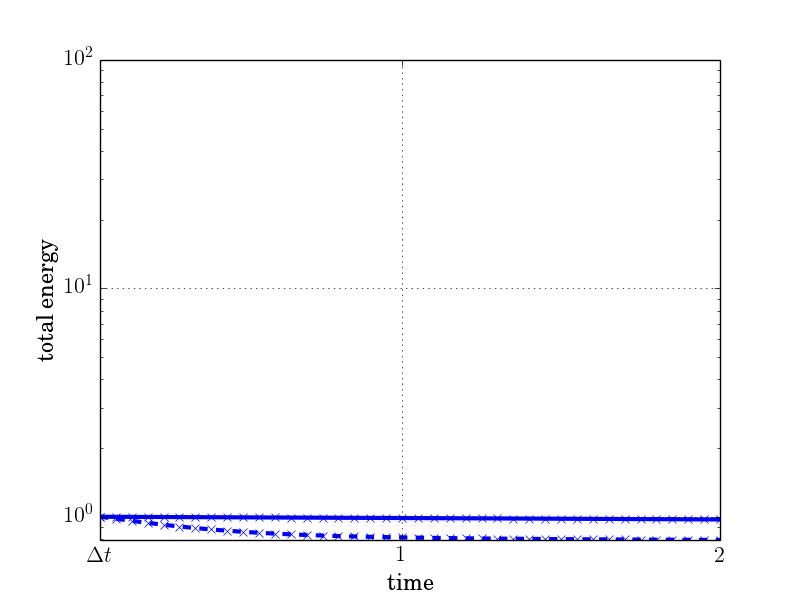}}
 \subcaptionbox{\tiny{$\Delta t = 5\cdot 10^{-2}$, $h_s = 1/16$}.}
   {\includegraphics[scale=.2]{./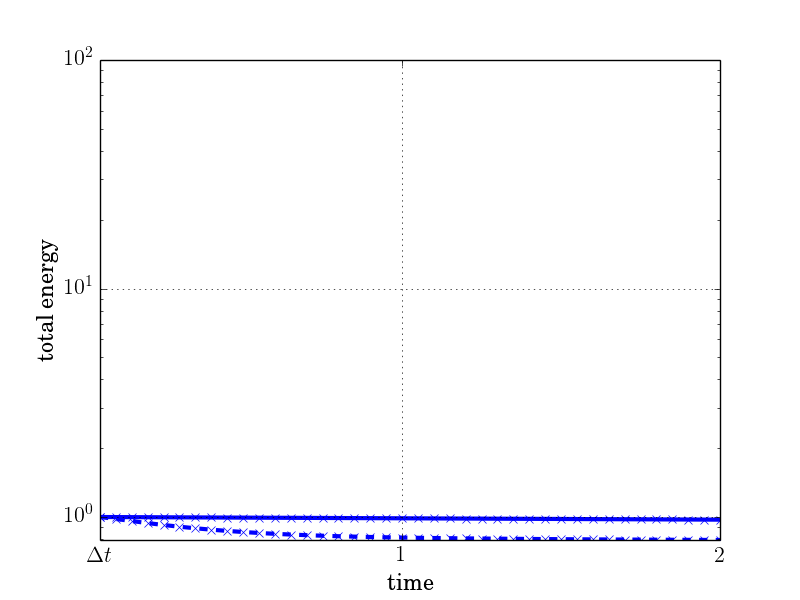}}
 \subcaptionbox{\tiny{$\Delta t = 5\cdot 10^{-2}$, $h_s = 1/32$}.}
   {\includegraphics[scale=.2]{./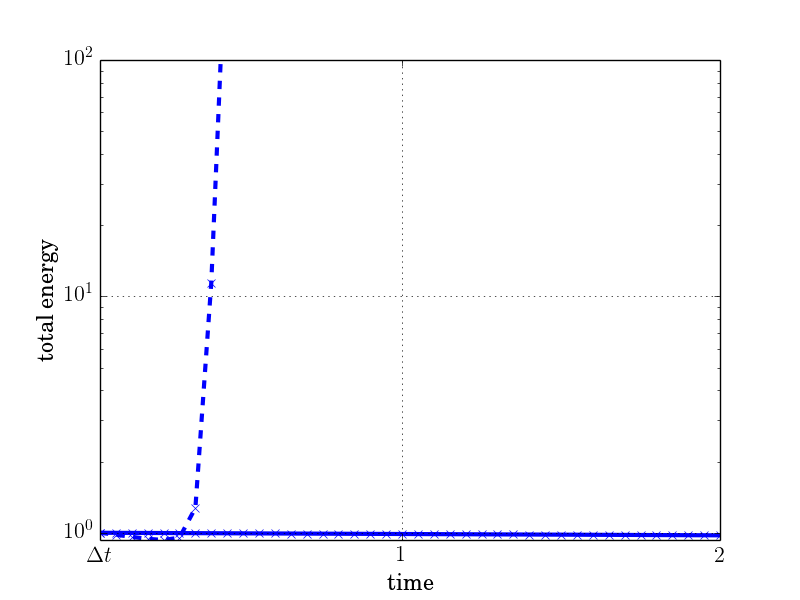}\label{subfig:cfl02}}
 \caption{Energy ratio \NEW{(see Equation~\eqref{eq:ener_comp})} for codimension one structure. The structure elastic constant $\kappa = 5$, $h_x = 1/32$,
 the fluid viscosity $\nu = 1$, $\delta\rho = 0$. The solid line correspond to the \ibmdlm scheme, while 
 the dashed line marks the energy for the \feibm scheme.
 }
 \label{fig:thin_energy_rho00}
 \end{figure}

 \begin{figure}
 \centering
 \subcaptionbox{\tiny{$\Delta t = 10^{-1}$, $h_s = 1/8$}.}
   {\includegraphics[scale=.2]{./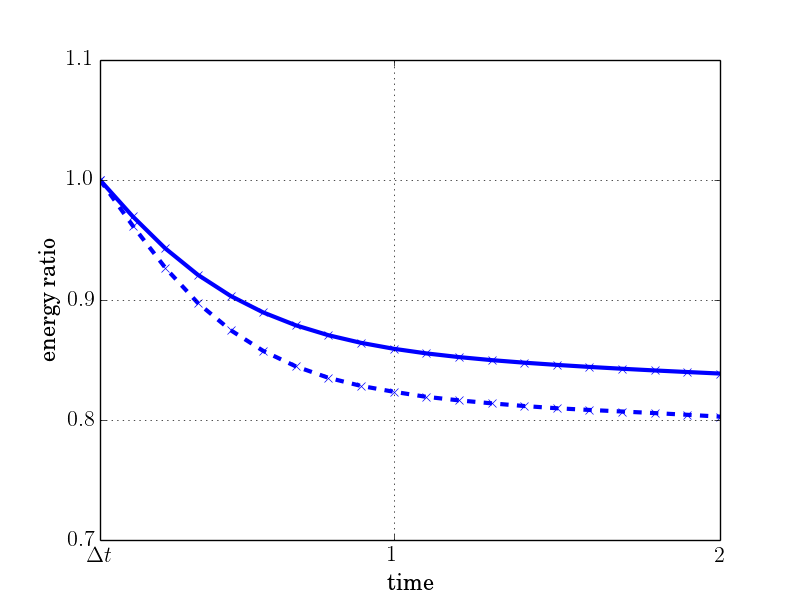}}
 \subcaptionbox{\tiny{$\Delta t = 10^{-1}$, $h_s = 1/16$}.}
   {\includegraphics[scale=.2]{./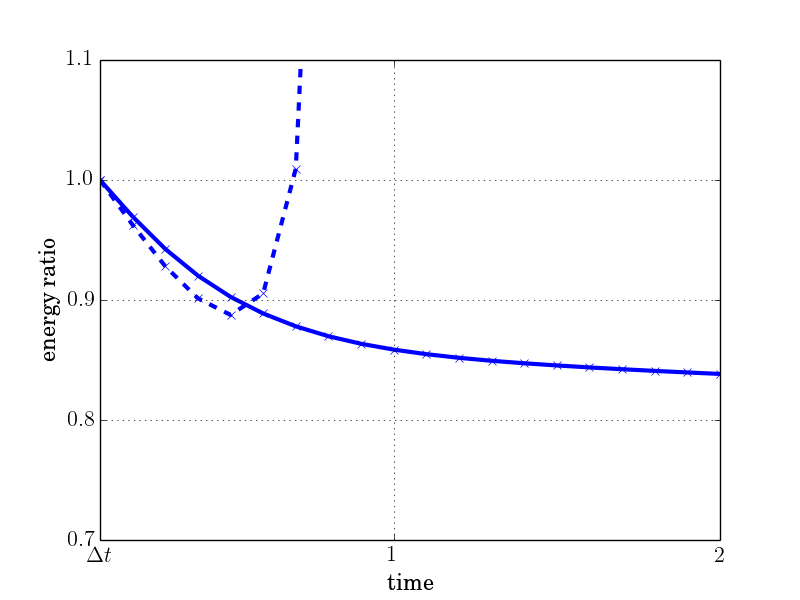}}
 \subcaptionbox{\tiny{$\Delta t = 10^{-1}$, $h_s = 1/32$}.}
   {\includegraphics[scale=.2]{./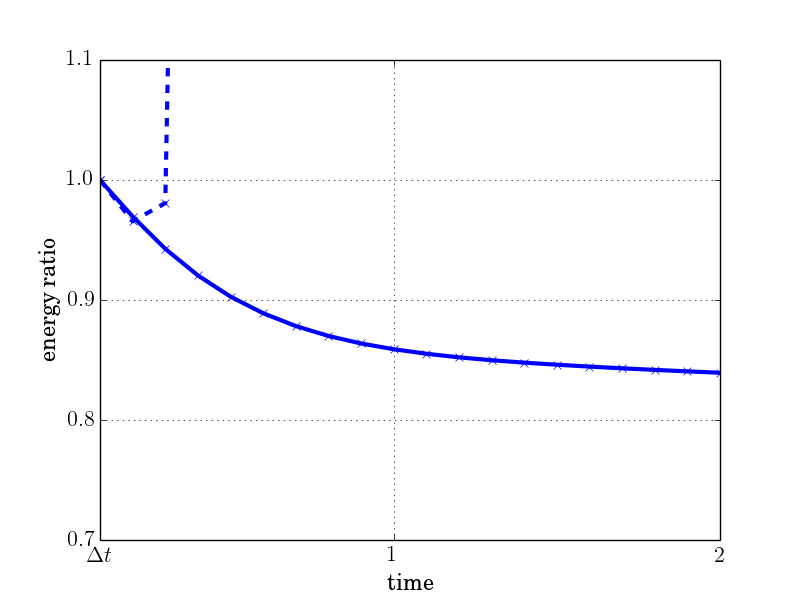}}\\
 \subcaptionbox{\tiny{$\Delta t = 5\cdot 10^{-2}$, $h_s = 1/8$}.}
   {\includegraphics[scale=.2]{./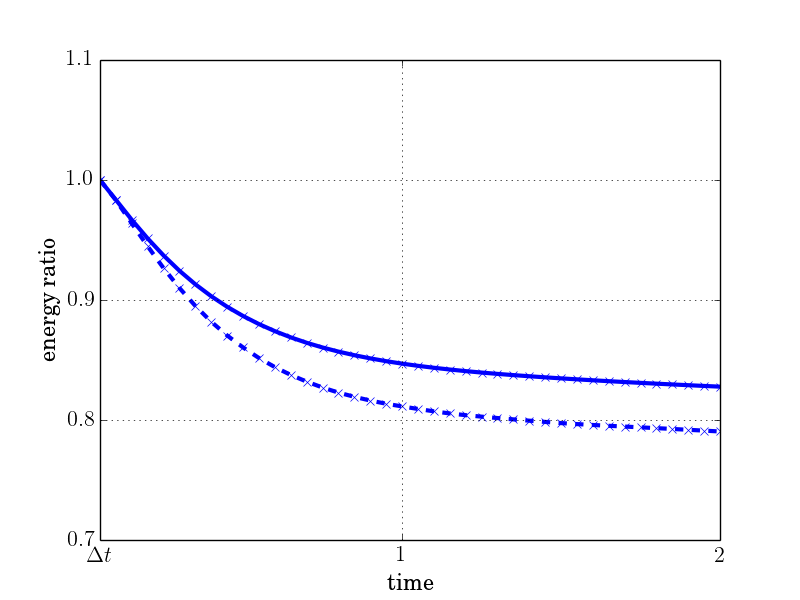}}
 \subcaptionbox{\tiny{$\Delta t = 5\cdot 10^{-2}$, $h_s = 1/16$}.}
   {\includegraphics[scale=.2]{./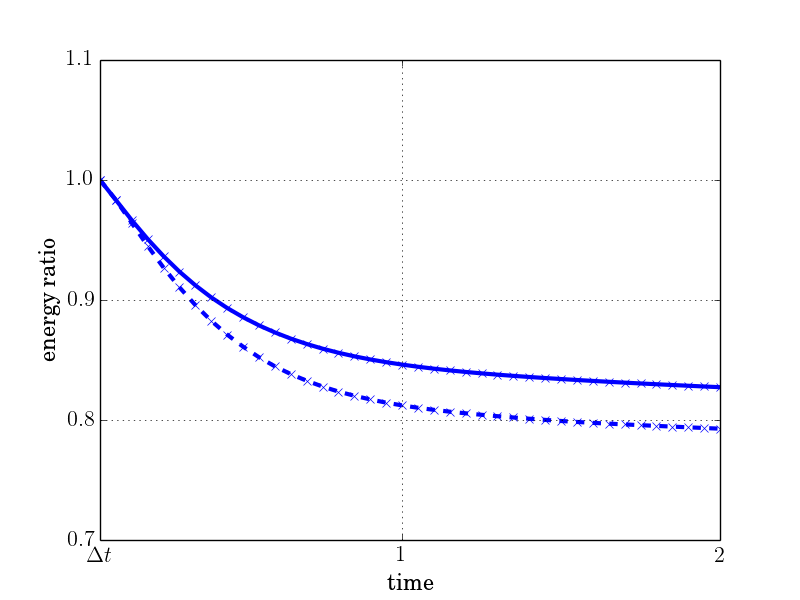}}
 \subcaptionbox{\tiny{$\Delta t = 5\cdot 10^{-2}$, $h_s = 1/32$}.}
   {\includegraphics[scale=.2]{./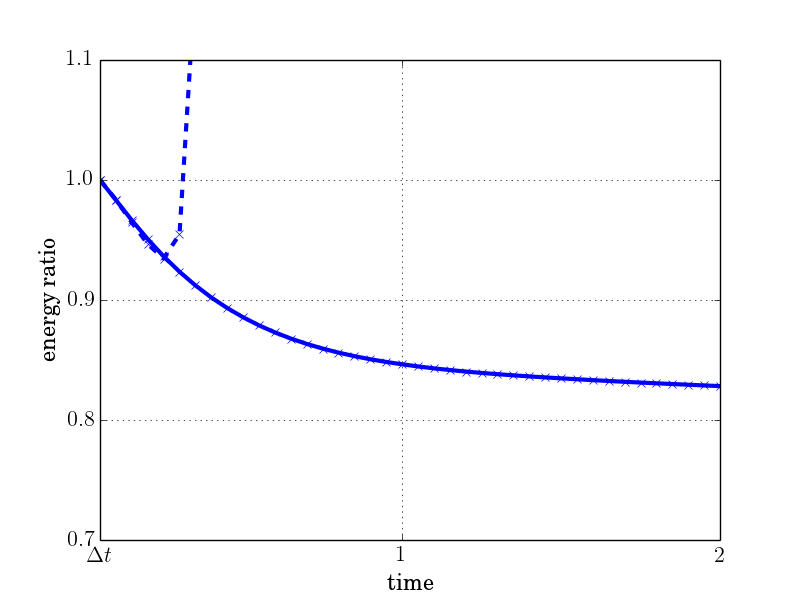}}
 \caption{Energy ratio \NEW{(see Equation~\eqref{eq:ener_comp})} for codimension one structure. The structure elastic constant $\kappa = 5$, $h_x = 1/32$,
 the fluid viscosity $\nu = 1$, $\delta\rho = 0.3$. The solid line correspond to the \ibmdlm scheme, while 
 the dashed line marks the energy for the \feibm scheme.
 }
 \label{fig:thin_energy_rho03}
 \end{figure}
 
 \begin{figure}
 \centering
 \subcaptionbox{\tiny{$\Delta t = 10^{-1}$, $h_x = 1/4$}.}
   {\includegraphics[scale=.2]{./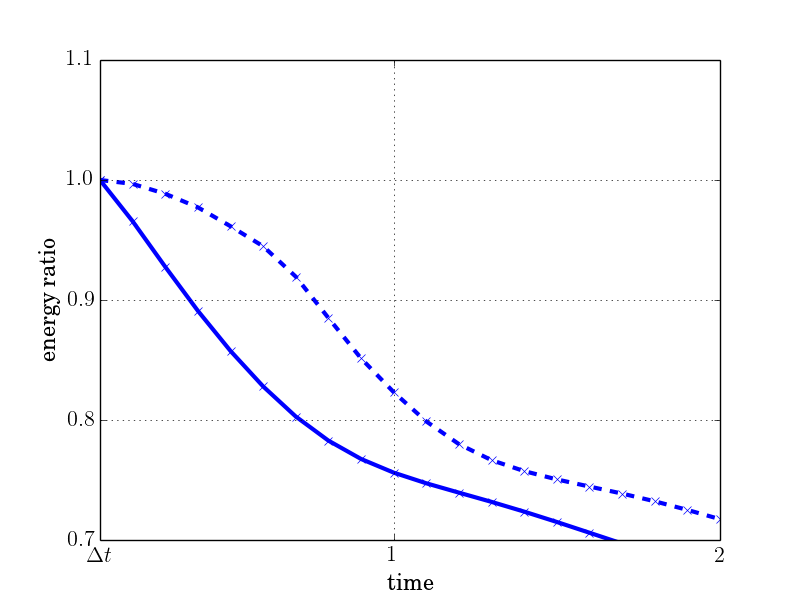}}
 \subcaptionbox{\tiny{$\Delta t = 10^{-1}$, $h_x = 1/8$}.}
   {\includegraphics[scale=.2]{./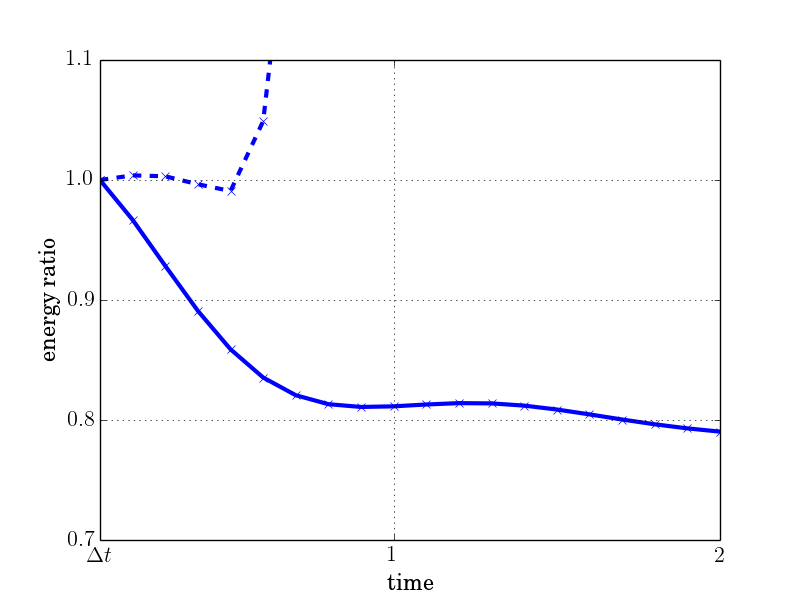}}
 \subcaptionbox{\tiny{$\Delta t = 10^{-1}$, $h_x = 1/16$}.}
   {\includegraphics[scale=.2]{./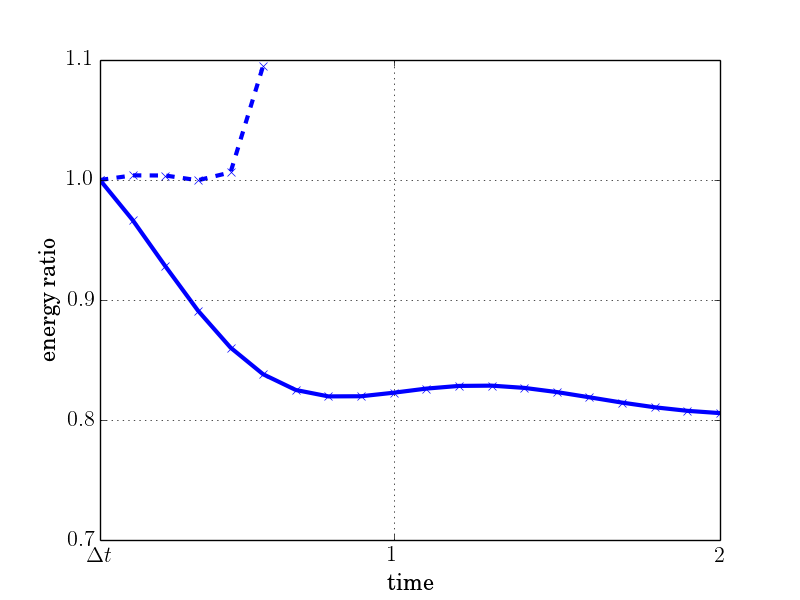}}\\
  \subcaptionbox{\tiny{$\Delta t = 5\cdot10^{-2}$, $h_x = 1/4$}.}
   {\includegraphics[scale=.2]{./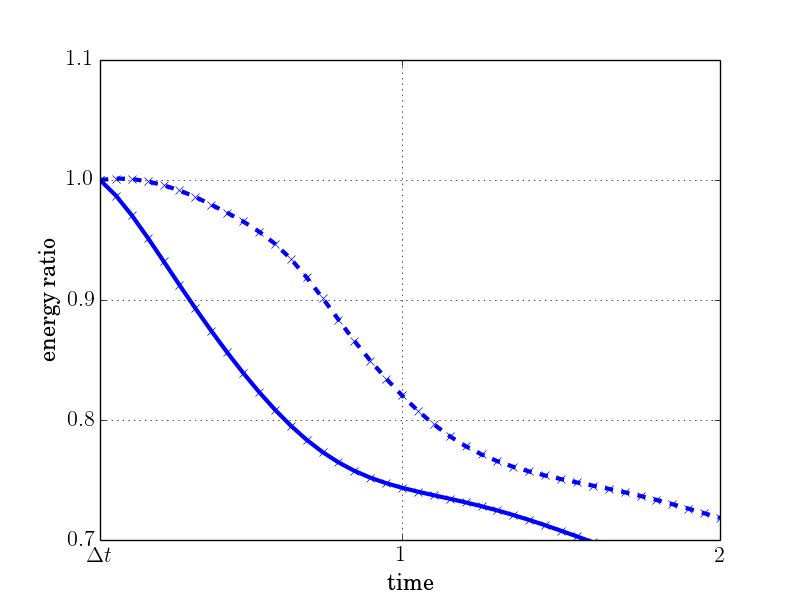}}
 \subcaptionbox{\tiny{$\Delta t = 5\cdot10^{-2}$, $h_x = 1/8$}.}
   {\includegraphics[scale=.2]{./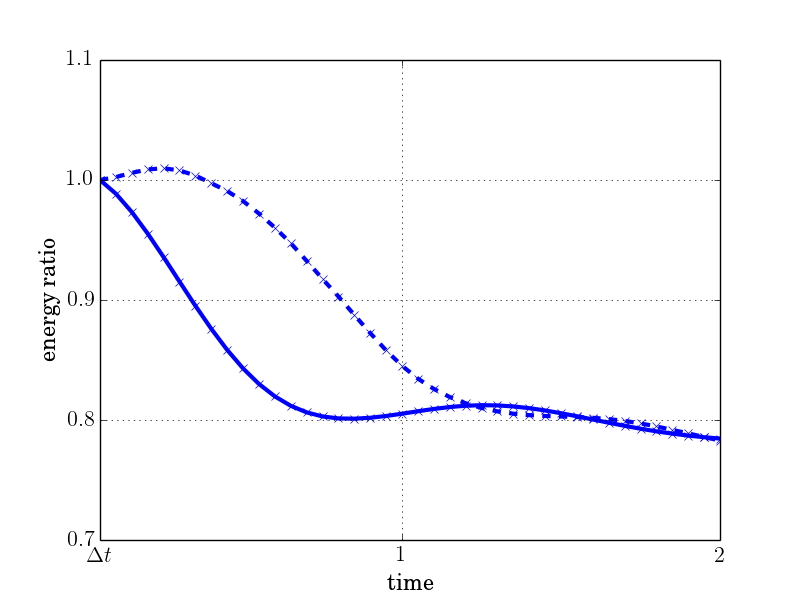}}
 \subcaptionbox{\tiny{$\Delta t = 5\cdot10^{-2}$, $h_x = 1/16$}.}
   {\includegraphics[scale=.2]{./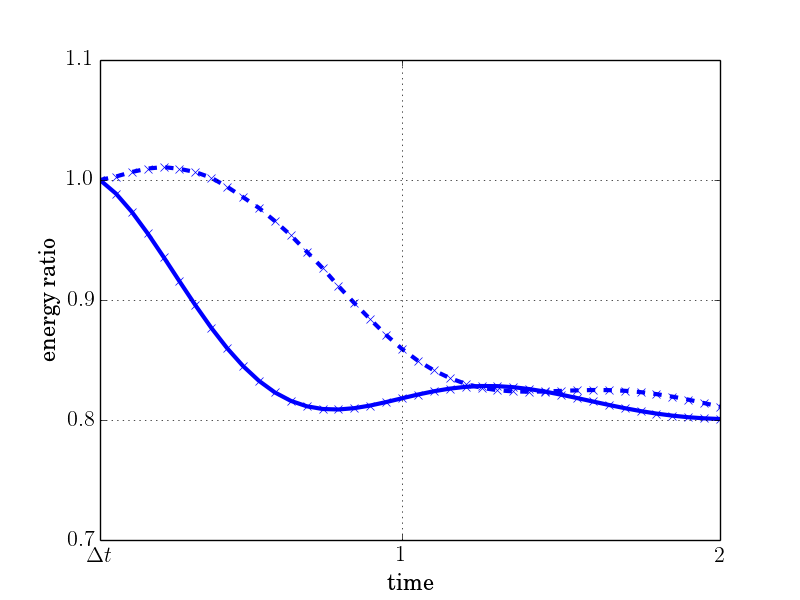}}
 \caption{Energy ratio \NEW{(see Equation~\eqref{eq:ener_comp})} for codimension zero structure. The structure elastic constant $\kappa = 1$, $h_s = 1/8$,
 the fluid viscosity $\nu = 0.05$, $\delta\rho = 0.3$. The solid line correspond to the \ibmdlm scheme, while 
 the dashed line marks the energy for the \feibm scheme.}
 \label{fig:thick_energy_rho03}
 \end{figure}

Since the fluid considered in our experiments is incompressible, an important
physical property that has to be preserved is the mass conservation of the
coupled scheme (see~\cite{bcgg2012,bcggumi} for related work in this
framework). It turns out that, unexpectedly, the \ibmdlm scheme enjoys better
conservation properties than the \feibm. We do not have a theoretical
explanation for this phenomenon yet; in Figure~\ref{fg:masscons} we report the
comparison between the two cases (structure of codimension one). 
In Figures~\ref{subfig:str_dlm} and~\ref{subfig:str_ibm} we report the
evolution of the structure position during the simulation.
Here it is already clear how better is the mass preservation for the \ibmdlm
scheme. To make this result even clearer, in Figure~\ref{subfig:str_cfr} we present the final 
structure position for both schemes.

\begin{figure}
\centering
\subcaptionbox{\ibmdlm.\label{subfig:str_dlm}}
    {\includegraphics[scale=.3]{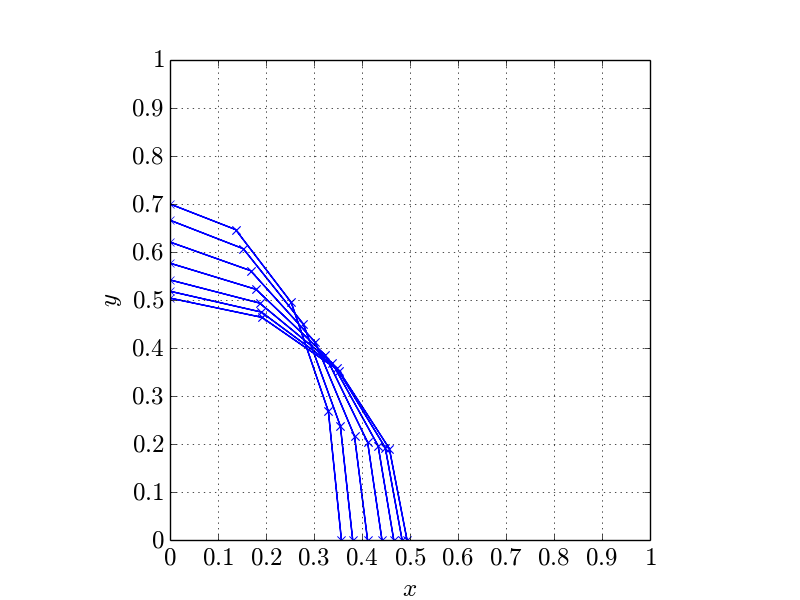}}
\subcaptionbox{\feibm.\label{subfig:str_ibm}}
    {\includegraphics[scale=.3]{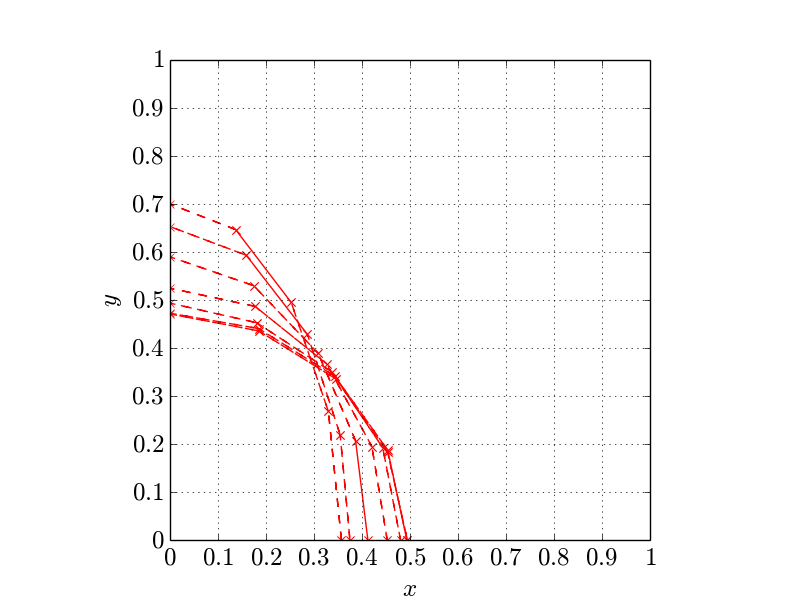}}
\subcaptionbox{Final position for the two schemes.\label{subfig:str_cfr}}
    {\includegraphics[scale=.3]{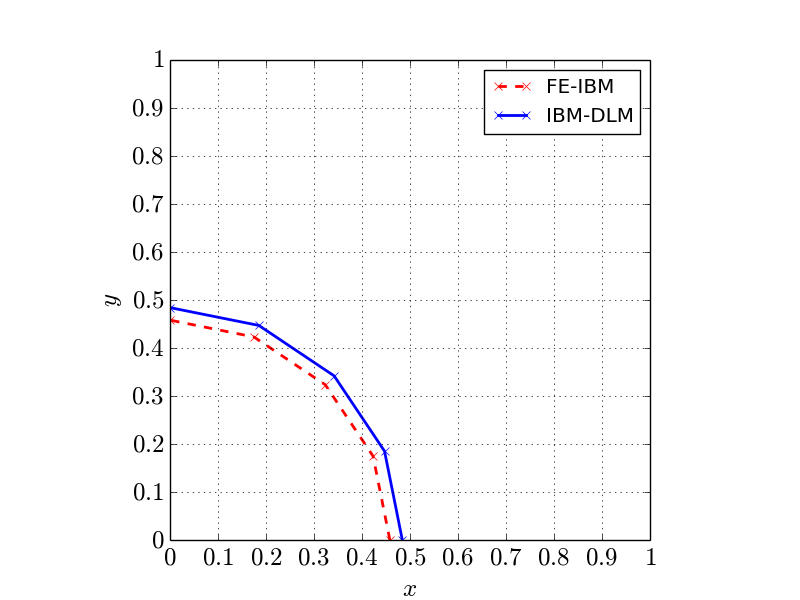}}
\caption{Comparison of mass conservation for the \ibmdlm (left) and \feibm
(right) schemes}
\label{fg:masscons}
\end{figure}

This phenomenon is more evident when higher order schemes are used. For
instance, in Figure~\ref{fg:p3p2} we show the results of the same simulation
when the enhanced Hood--Taylor Stokes element $P_3-(P_2^c+P_1)$ is used. The
justification of this effect is currently under investigation.
\NEW{More numerical results about this aspect (including some quantitative
analysis of the area loss) are reported in~\cite{ecmi}}.

\begin{figure}
\includegraphics[width=6cm]{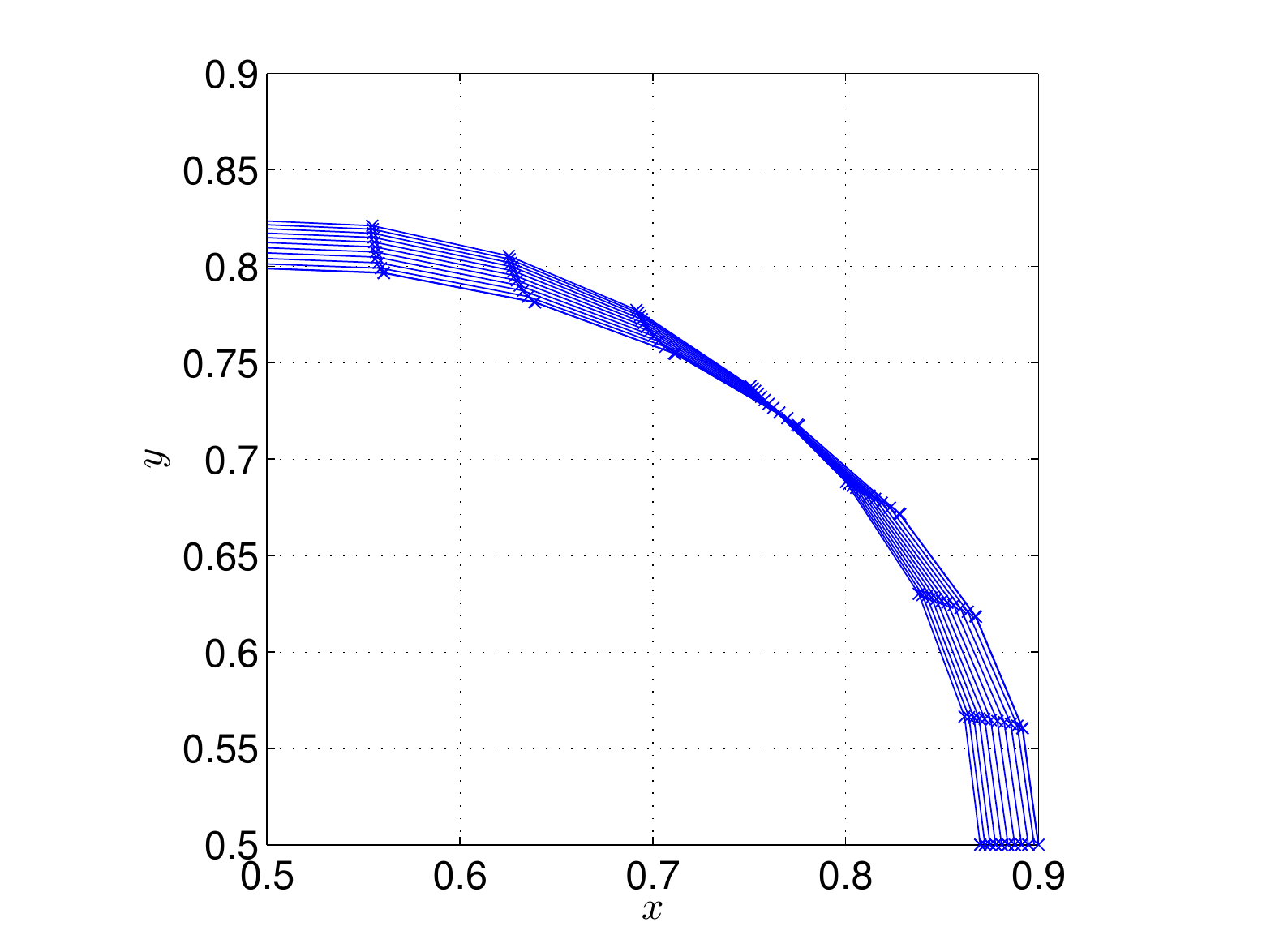}
\includegraphics[width=6cm]{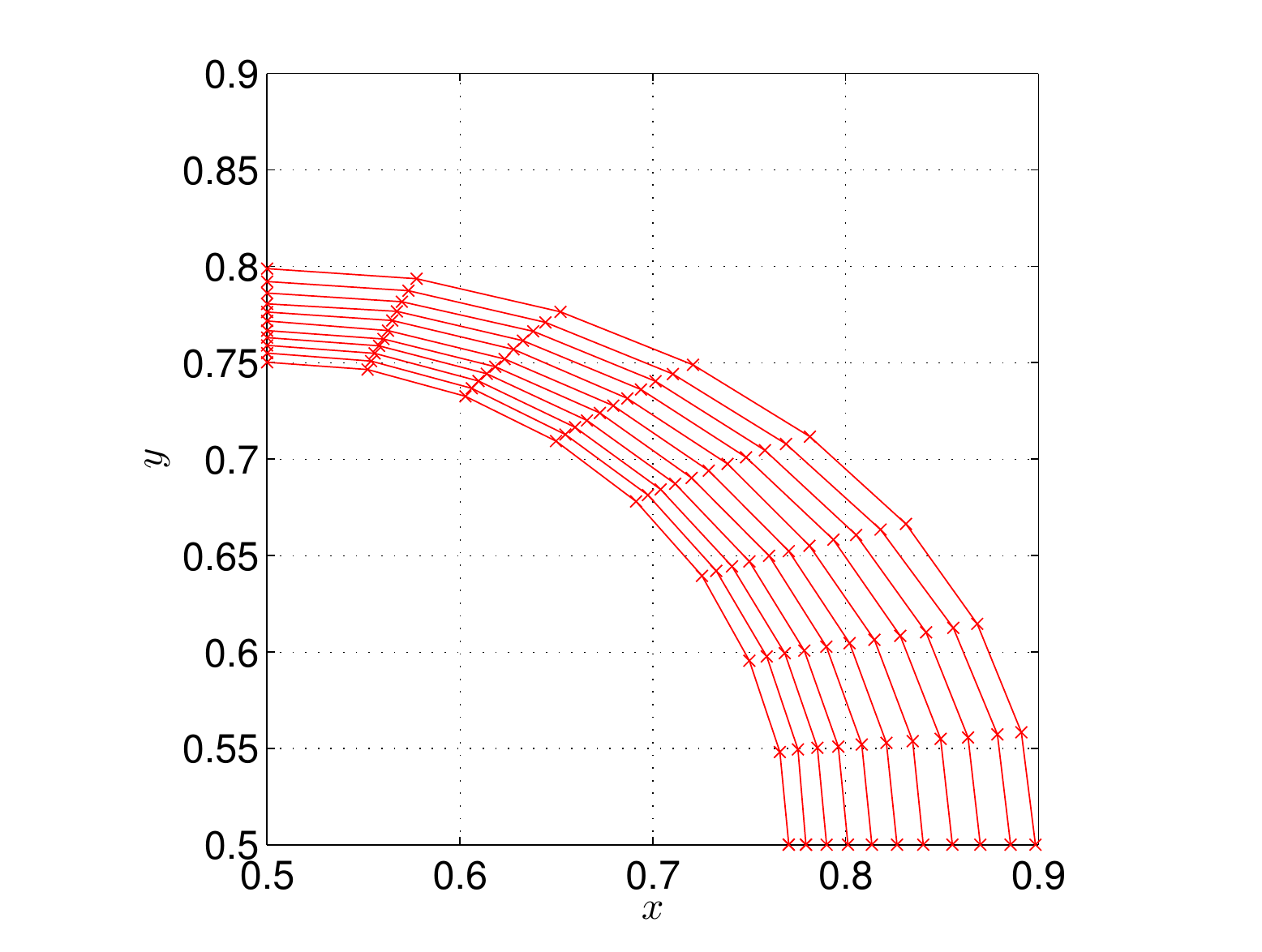}
\caption{Mass conservation of the \ibmdlm (left) and \feibm (right) with
higher order fluid element.\label{fg:p3p2}}
\end{figure}

\NEW{Our last numerical tests aim at checking numerically the convergence
order of our scheme.

We start with the convergence in space for the \ibmdlm applied to a
codimension one setting. We consider a simple (steady) test with 
analytical exact solution. The initial position of the structure is
a circle immersed in a fluid at rest, so that the asymptotic configuration
remains unchanged.
Table~\ref{tab:dlm_accuracy} reports the rate of convergence which is
perfectly in agreement with the regularity of the solution. Being the 
pressure solution discontinuous, the optimal convergence rate is $1.5$ for the velocity
in $L^2$ and $0.5$ for the pressure in $L^2$.

Then, we check the convergence in time. This point is particularly interesting, 
since the kinematic constraint $\mathsf{L}_f$ is enforced using the domain configuration at the
previous time step. Table~\ref{tab:dlm_time_conv} shows that this does not
affect the first order of convergence of the Euler scheme. The test case
in this situation consists of a codimension zero setting, corresponding to the
framework of Figures~\ref{fig:ellpse_vel_thick}
and~\ref{fig:ellpse_prex_thick}. In this case there is no analytical solution. 
A reference solution, denoted with the subscript $\mathit{ex}$, is
obtained with $\Delta t = 10^{-3}$: all errors are measured with respect to this reference
solution.
}

\begin{table}
 \NEW{
\begin{tabular}{cc cc cc cc cc}\toprule
$h_x$ & & $||p-p_h||_{L^2}$ & & $L^2$-rate & & $||\mathbf{u}-\mathbf{u}_h||_{L^2}$ & & $L^2$-rate & \\\cmidrule{1-1}\cmidrule{3-5}\cmidrule{7-9}
$1/4$& & 2.96063&  & - &  &0.02225&  & - &  \\\cmidrule{1-1}\cmidrule{3-5}\cmidrule{7-9}
$1/8$& & 2.10271&  &   0.49&  &0.01022&  &   1.12&  \\\cmidrule{1-1}\cmidrule{3-5}\cmidrule{7-9}
$1/16$& & 1.43488&  &   0.55&  &0.00392&  &   1.38&  \\\cmidrule{1-1}\cmidrule{3-5}\cmidrule{7-9}
$1/24$& & 1.15722&  &   0.53&  &0.00212&  &   1.52&  \\\cmidrule{1-1}\cmidrule{3-5}\cmidrule{7-9}
$1/32$& & 0.97502&  &   0.60&  &0.00134&  &   1.60&  \\\cmidrule{1-1}\cmidrule{3-5}\cmidrule{7-9}
$1/40$& & 0.88740&  &   0.42&  &0.00102&  &   1.22&  \\\cmidrule{1-1}\cmidrule{3-5}\cmidrule{7-9}
$1/64$& & 0.69442&  &   0.52&  &0.00052&  &   1.43&  \\\bottomrule
\end{tabular}
}

 \caption{\NEW{Spatial convergence for the \ibmdlm scheme.
In each case the mesh size for the structure is the same as for the fluid.}}
 \label{tab:dlm_accuracy}
\end{table}

\begin{table}
 \NEW{
\begin{tabular}{cc cc cc cc cc}\toprule
$\Delta t$ & & $||\mathbf{X}_{\mathit{ex}}-\mathbf{X}_h||_{L^2}$ & & $L^2$-rate & & $||\mathbf{u}_{\mathit{ex}}-\mathbf{u}_h||_{L^2}$ & & $L^2$-rate & \\\cmidrule{1-1}\cmidrule{3-5}\cmidrule{7-9}
1$\cdot 10^{-1}$& & 5.54945e-06&  & - &  &1.65152e-05&  & - &  \\\cmidrule{1-1}\cmidrule{3-5}\cmidrule{7-9}
5$\cdot 10^{-2}$& & 2.73334e-06&  &   1.02&  &7.92803e-06&  &   1.06&  \\\cmidrule{1-1}\cmidrule{3-5}\cmidrule{7-9}
2$\cdot 10^{-2}$& & 1.05724e-06&  &   1.04&  &3.01373e-06&  &   1.06&  \\\cmidrule{1-1}\cmidrule{3-5}\cmidrule{7-9}
1$\cdot 10^{-2}$& & 5.00445e-07&  &   1.08&  &1.41808e-06&  &   1.09&  \\\bottomrule
\end{tabular}
}

 \caption{\NEW{Time convergence for \ibmdlm test case.
 The mesh sizes for the fluid and for the structure are $1/16$.}}
 \label{tab:dlm_time_conv}
\end{table}

We conclude this section with some algorithmic comments that might be
useful in order to enhance the solution accuracy.
The solution of system~\eqref{eq:matriciona} can be interpreted as a
semi-implicit discretization of the original fluid-structure problem. The
strategy we are going to describe represents a first investigation towards
the approximation of a fully implicit scheme.

The resulting algorithm (based on a fixed point iteration) is pretty simple:
in the following the discretization index $h$ will be omitted for the sake of
clearness.
Consider the $n$-th time step and the $k$-th iteration and denote
$\X_0 = \X^n$. For $k\ge1$ we solve:
\[
\left(
\begin{array}{cc|c|c}
\mathsf{A}&\mathsf{B}^\top&0&\mathsf{L}_f(\X_{k-1})^\top\\
\mathsf{B}&0&0&0\\
\hline\\[-10pt]
0&0&\mathsf{A}_s&-\mathsf{L}^\top_s\\
\hline\\[-10pt]
\mathsf{L}_f(\X_{k-1})&0&-\mathsf{L}_s&0
\end{array}
\right)
\left(
\begin{array}{c}
\u_{k}\\p_{k}\\
\hline\\[-10pt]
\X_{k} \\
\hline\\[-10pt]
\llambda_k
\end{array}
\right)=
\left(
\begin{array}{c}
\mathsf{f}\\0\\
\hline\\[-10pt]
\mathsf{g}\\
\hline\\[-10pt]
\mathsf{d}
\end{array}
\right).
\]
We consider the fluid-structure coupling residual as
\begin{equation}
r\left(\u_k,\X_k\right) = \left\|\mathsf{L}_f(\X_{k})\u_{k}-\frac{\mathsf{L}_s \X_{k}-\mathsf{L}_s \X^n}{\Delta t}\right\|_{0,\B}.
\label{eq:fsi_res}
\end{equation}
If $r\left(\u_k,\X_k\right)\le\varepsilon$ we set $(\u^{n+1},p^{n+1},\llambda^{n+1},\X^{n+1})= (\u_k,p_k,\llambda_k,\X_k)$,
otherwise continue iterating.
The residual values for ten iterations, (corresponding to ten consecutive time
steps) are represented in Figure~\ref{fig:nnl_iter}.
Both codimension one and zero are considered.
The simulation parameters are specified in the figure caption.
Different lines correspond to different time steps (generally, the top lines
correspond to the first time step, and so on monotonically down to the bottom
line corresponding to the tenth time step).
It is interesting to notice that the residual 
decreases as the structure approaches a stationary state. Moreover, and most
importantly, within the same time step,
the algorithm converges with approximately first order. This property is
important in view of designing multigrid type algorithms in order to reduce
the computational cost of the implicit scheme. \NEW{The natural evolution of this 
approach would combine fixed point iterations with a multigrid V-cycle. A
crucial aspect is to reduce the computational cost needed to assembling the term
$\mathsf{L}_f(\X_{k-1})$. Future works will be addressed to tackle this issue.}

\begin{figure}
 \centering
 \subcaptionbox{Codimension one.}
   {\includegraphics[scale=.4]{./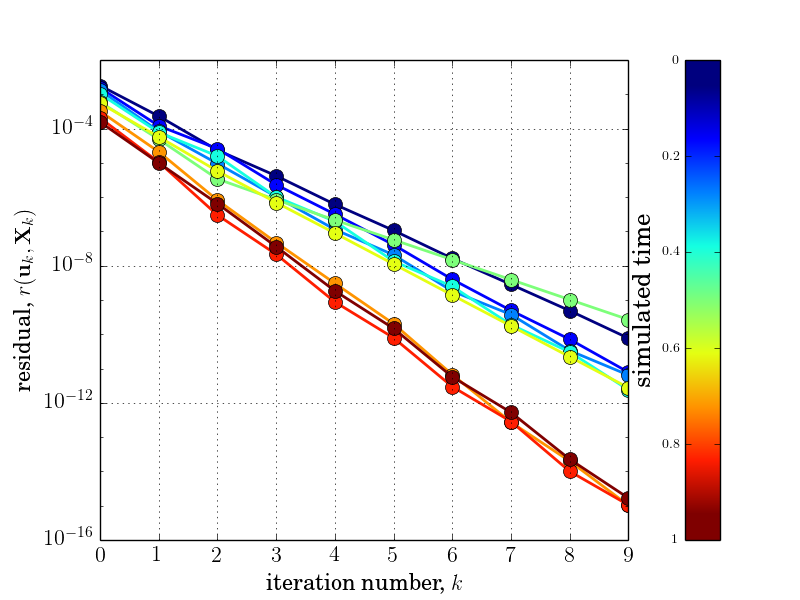}}
 \subcaptionbox{Codimension zero.}
   {\includegraphics[scale=.4]{./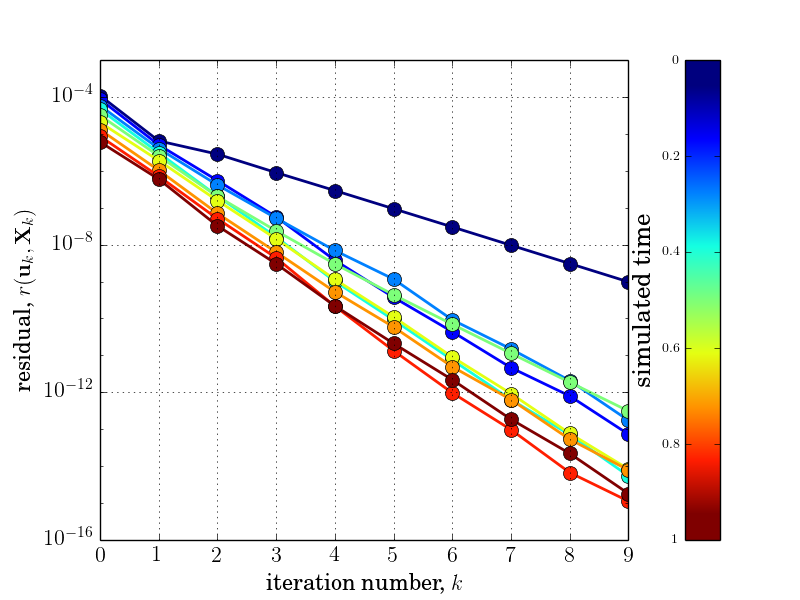}}
 \caption{Convergence for fixed point iteration for the fluid-structure
interaction 
 nonlinear coupling. The simulation parameters are $h_x=h_s = 1/8$, $\kappa = 8$, 
 $\Delta t = 1/10$, results are presented for codimension one and zero. The residual 
 is defined in equation~\eqref{eq:fsi_res}. Different color refer to different
 simulated times.}
 \label{fig:nnl_iter}
 \end{figure}
 
Finally, in Figure~\ref{fig:square} we present some snapshots of the
simulation of a codimension zero solid which is a square at its equilibrium
configuration.
In the initial configuration the solid is stretched and has a rectangular
shape.
At the beginning of the simulation, Figure~\ref{fig:sq00}, the corner of the
structure imposes a singularity to velocity field and a vortex arises.
In snapshot~\ref{fig:sq01} the structure is bouncing along the $x$ axis.
In Figure~\ref{fig:sq02} the structure interacts again with the fluid, as it
is heading to the equilibrium position, a second vortex arises as a
consequence of the structure motion. In Figure~\ref{fig:sq03} the structure
has almost approached its equilibrium position.
 
\begin{figure}
 \centering
 \subcaptionbox{$t = 0.03$.\label{fig:sq00}}
   {\includegraphics[scale=.3]{./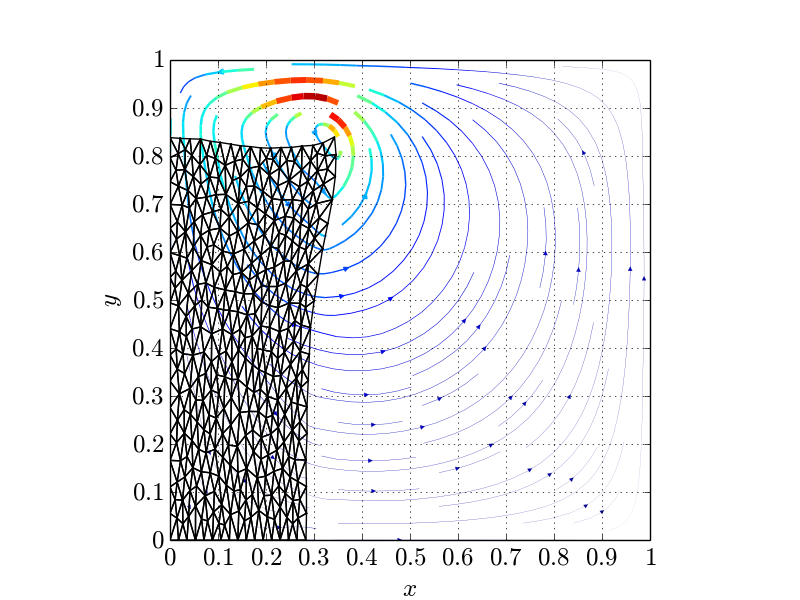}}
 \subcaptionbox{$t = 0.17$.\label{fig:sq01}}
   {\includegraphics[scale=.3]{./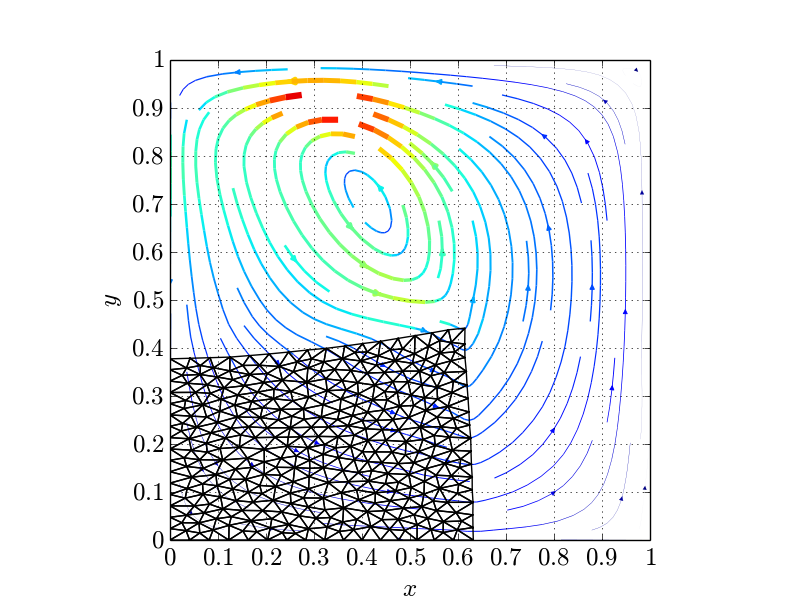}}
 \subcaptionbox{$t = 0.24$.\label{fig:sq02}}
   {\includegraphics[scale=.3]{./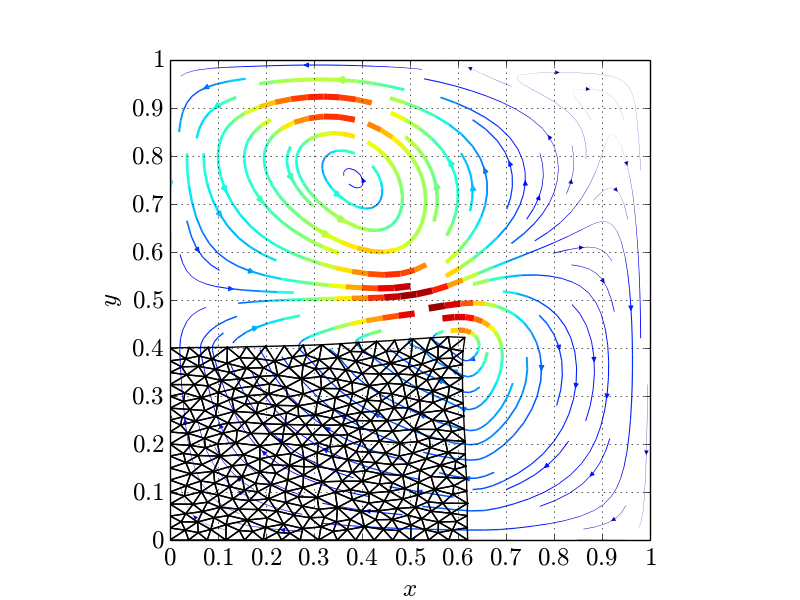}}
 \subcaptionbox{$t =2$.\label{fig:sq03}}
   {\includegraphics[scale=.3]{./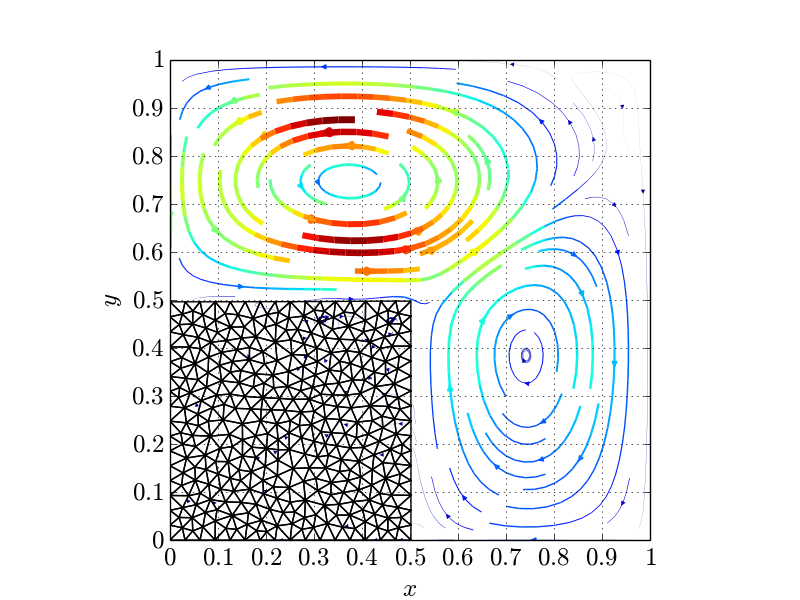}}
 \caption{Simulation of a square structure that moves from an initial rectangular position to 
 the equilibrium position. The fluid mesh $h_x = 1/32$, the structure one is $h_s = 1/16$, $\Delta t = 10^{-3}$.
 The fluid viscosity is $\nu = 0.01$ and the structure elastic constant is $100$.}
\label{fig:square}
\end{figure}

\section{Conclusions}
In this paper we presented a new formulation for the finite element
approximation of fluid-structure interaction problems within the setting of
the Immersed Boundary Method. With this formulation the coupling between the
fluid and the structure is modeled with the help of a distributed Lagrange
multiplier, so that a fully variational problem is obtained. The main feature
of the fully discrete scheme associated with this formulation, is that its
stability does not require any restriction on the time step size.

Numerical experiments confirm the theoretical energy estimates.

\bibliographystyle{plain}
\bibliography{ref}

\end{document}